\documentclass[12pt,leqno]{article}
\usepackage{amssymb,amsfonts,amsmath,amsthm,amscd,mathrsfs}
\usepackage{psfrag}

\usepackage{graphicx}
\setlength{\textwidth}{16cm}
\addtolength{\textheight}{5cm}
\addtolength{\topmargin}{-2cm}
\addtolength{\oddsidemargin}{-1.5cm}
\addtolength{\evensidemargin}{-1cm}
\hoffset2.5pt

\def\R{{{{\rm l} \kern -.15em {\rm R}}}}
\def\N{{{{\rm l} \kern -.15em {\rm N}}}}
\def\E{{{{\rm l} \kern -.15em {\rm E}}}}
\def\P{{{{\rm l} \kern -.15em {\rm P}}}}
\def\D{{{{\rm l} \kern -.15em {\rm D}}}}
\def\L{{{{\rm l} \kern -.15em {\rm L}}}}
\def\Z{{{{\rm Z} \kern -.35em {\rm Z}}}}

\def\IE{{\mathbb E}}
\def\IP{{\mathbb P}}
\def\IR{{\mathbb R}}
\def\IN{{\mathbb N}}

\def\IZ{{\mathbb Z}}

\def\n{\noindent}
\def\dsl{\textstyle\sum\limits}

\def\dis{\displaystyle}
\def\o{\omega}
\def\fr{\mbox{\footnotesize $\dis\frac{1}{2}$}}
\def\frvier{\mbox{\footnotesize $\dis\frac{1}{4}$}}
\def\ov{\overline}

\def\f{\footnotesize}
\def\r{\rightarrow}
\def\point{{\mbox{\large $.$}}}

\def\wt{\widetilde}

\def\cA{{\cal A}}
\def\cB{{\cal B}}
\def\cC{{\cal C}}

\def\cT{{\cal T}}

\def\cI{{\cal I}}

\def\cK{{\cal K}}

\def\cF{{\cal F}}

\def\cS{{\cal S}}

\def\cV{{\cal V}}
\def\cW{{\cal W}}

\dimendef\dimen=0

\newtheorem{theorem}{Theorem}[section]
\newtheorem{lemma}[theorem]{Lemma}

\newtheorem{proposition}[theorem]{Proposition}
\newtheorem{remark}[theorem]{Remark}

\thispagestyle{empty}
\setcounter{page}{0}

\begin{document}

\noindent

~

\bigskip
\begin{center}
{\bf PERCOLATION FOR THE VACANT SET OF \\ RANDOM INTERLACEMENTS}
\end{center}

\begin{center}
Vladas Sidoravicius\footnote[1]{CWI, Kruislaan 413, NL-1098 SJ, Amsterdam, and IMPA, Estrada Dona Castorina 110, Jardim 

 ~~Botanico, CEP 22460-320, Rio de Janeiro, RJ, Brasil.} and Alain-Sol Sznitman\footnote[2]{Departement Mathematik, ETH Z\"urich, CH-8092 Z\"urich, Switzerland.
 
Vladas Sidoravicius would like to thank the FIM for financial support and hospitality during his visits 

to ETH. His research was also partially supported by CNPq and FAPERJ.}
\end{center}

\begin{center}
\end{center}

\bigskip
\begin{abstract}
We investigate random interlacements on $\IZ^d$, $d \ge 3$. This model recently introduced in \cite{Szni07a} corresponds to a Poisson cloud on the space of doubly infinite trajectories modulo time-shift tending to infinity at positive and negative infinite times. A non-negative parameter $u$ measures how many trajectories enter the picture. Our main interest lies in the percolative properties of the vacant set left by random interlacements at level $u$. We show that for all $d \ge 3$ the vacant set at level $u$ percolates when $u$ is small. This solves an open problem of \cite{Szni07a}, where this fact has only been established when $d \ge 7$. It also completes the proof of the non-degeneracy in all dimensions $d \ge 3$ of the critical parameter $u_*$ of \cite{Szni07a}.
\end{abstract}

\vfill
\n

\vfill


\newpage
\thispagestyle{empty}
~

\newpage
\setcounter{page}{1}

 \setcounter{section}{-1}
 
 \section{Introduction}
 \setcounter{equation}{0}
 
The present work investigates the model of random interlacements on $\IZ^d$, $d \ge 3$, introduced in \cite{Szni07a}. Informally this is a translation invariant model which describes the microscopic structure left in the bulk by a random walk on a large discrete torus or on a discrete cylinder with base a large discrete torus when the walk is run up to times proportional to the number of sites in the torus or to the square of the number of sites in the base, cf.~\cite{Szni07b}, \cite{Wind08}. The main purpose of this article is to answer an open question of \cite{Szni07a} and show that for small $u > 0$ the vacant set at level $u$ in $\IZ^d$ left by random interlacements does percolate. In \cite{Szni07a} this had only been proved to be the case when $d \ge 7$. The present work shows that small dimensions behave in a similar fashion.

\medskip
We now describe the model somewhat informally and refer to Section 1 for precise definitions. Random interlacements consist of a cloud of paths constituting a Poisson point process on the space of doubly infinite $\IZ^d$-valued trajectories modulo time-shift tending to infinity at positive and negative infinite times. A non-negative parameter $u$ plays in essence the role of a multiplicative factor of the intensity measure of this Poisson point process. In a standard fashion one constructs on the same space $(\Omega, \cA, P)$, see below (\ref{1.9}), the whole family $\cI^u$, $u \ge 0$, of random interlacements at level $u \ge 0$, cf.~(\ref{1.16}). They are the traces on $\IZ^d$ of the cloud of trajectories modulo time-shift ``up to level $u$''. The random subsets $\cI^u$ increase with $u$ and for $u > 0$ are infinite random connected subsets of $\IZ^d$, ergodic under space translations, cf.~Theorem 2.1 of \cite{Szni07a}. The complement of $\cI^u$ in $\IZ^d$ is denoted with $\cV^u$. It is the so-called vacant set at level $u$, cf.~(\ref{1.17}). As shown in (2.16) of \cite{Szni07a}, the law $Q_u$ on $\{0,1\}^{\IZ^d}$ of the indicator function of $\cV^u$ is characterized by the property:
\begin{equation}\label{0.1}
Q_u[Y_x = 1, \;\mbox{for all} \;x \in K] = \exp\{ - u \;{\rm cap}(K)\}, \;\mbox{for all finite sets $K \subseteq \IZ^d$}\,,
\end{equation}

\n
where $Y_x$, $x \in \IZ^d$, stand for the canonical coordinates on $\{0,1\}^{\IZ^d}$ and cap$(K)$ for the capacity of $K$, cf.~(\ref{1.5}).

\medskip
Our main focus here lies in the percolative properties of $\cV^u$. For this purpose it is convenient to consider the non-increasing function
\begin{equation}\label{0.2}
\mbox{$\eta(u) = \IP[0$ belongs to an infinite connected component of $\cV^u], \;u \ge 0$}\,.
\end{equation}

\n
With Corollary 2.3 of \cite{Szni07a} one knows that the $\IP$-almost sure presence and absence of an infinite connected component (i.e.~``infinite cluster'') in $\cV^u$ are respectively equivalent to $\eta(u) > 0$ and $\eta(u) = 0$. One then introduces the critical parameter
\begin{equation}\label{0.3}
u_* = \inf\{u \ge 0; \;\eta(u) = 0\} \in [0,\infty]\,.
\end{equation}

\n
The main results of \cite{Szni07a}, cf.~Theorems 3.5 and 4.3, show that
\begin{equation}\label{0.4}
\mbox{$u_* < \infty$, for $d \ge 3$, and $u_* > 0$, for $d \ge 7$}\,,
\end{equation}

\n
i.e. $\cV^u$ does not percolate for large $u$, and at least when $d \ge 7$, percolates for small $u$. The main result in the present article, cf. Theorem 3.4, shows that
\begin{equation}\label{0.5}
u_* > 0, \;\mbox{for $d \ge 3$}\,,
\end{equation}

\n
and even that $\cV^u$ percolates in planes for small $u$. This solves an open problem of \cite{Szni07a} and proves that $u_*$ is non-degenerate in all dimensions. Let us also mention that with Theorem 1.1 and Corollary 1.2 of \cite{Teix08}, it is known when $\eta(u) > 0$, i.e. when $\cV^u$ percolates, the infinite cluster is almost surely unique, that $\eta$ is continuous on $[0,u_*]$ and has at most one point of discontinuity at $u_*$. It is at present unknown whether $\cV^{u_*}$ percolates or not.

\medskip
Let us give some comments on the proof of (\ref{0.5}). The difficulty in proving (\ref{0.5}) stems from the fact that the usual Peierls-type arguments that require a good enough exponential bound on $\IP[\cI^u \supseteq A]$ in terms of the cardinality $|A|$ for $A$ finite in $\IZ^2$ (viewed as a subset of $\IZ^d)$, so far only work when $d \ge 18$, see Remark 2.5 3) of \cite{Szni07a}. In fact when $d = 3$, there is no such exponential bound, cf.~(\ref{1.20}). This difficulty is closely related to the long range dependence present in the model: as shown in (1.68) of \cite{Szni07a}, the correlation of the events $\{x \in \cV^u\}$ and $\{y \in \cV^u\}$ decays as $c(u) |x-y|^{-(d-2)}$, when $|x-y|$ tends to infinity. To bypass this obstruction we employ a renormalization technique which is different but has a similar flavor to the methods of Section 3 of \cite{Szni07a}. To prove (\ref{0.5}) we show, cf.~(\ref{3.25}), (\ref{3.26}), that for small $u$ the probability that a $*$-circuit of $\cI^u \cap \IZ^2$ surrounds the origin is smaller than $1¤$, (we refer to the beginning of Section 1 for the definition of $*$-paths). For this purpose we develop estimates showing that for small $u$ the presence of long $*$-paths in $\cI^u \cap \IZ^2$ is unlikely. We consider a sequence of functions
\begin{align}
q_n(u) \mbox{``$=$''} &\;  \mbox{$\IP$-probability that  $\cV^u \cap \IZ^2$ contains a $*$-path from a given block}\label{0.6}
\\
&\; \mbox{of size $L_n$ to the complement of its $L_n$-neighborhood, for $n \ge 0$,}\nonumber
\end{align}

\n
(we refer to (\ref{2.7}), (\ref{2.8}) for the precise expression), with the aim of proving that for small $u$, $q_n(u)$ decays with $n$ at least as an inverse power of $L_n$. The sequence of length scales $L_n$, $n \ge 0$, in (\ref{0.6}) grows rapidly, and see (\ref{2.1}), (\ref{2.2}):
\begin{equation}\label{0.7}
L_n \approx L_0^{(1 + a)^n}, \;n \ge 0, \;\mbox{with $a = \mbox{\f $\dis\frac{1}{100}$}$}\,.
\end{equation}

\n
We derive a recurrence relation bounding $q_{n+1}(u_{n+1})$ in terms of $q_n(u_n)$ along a decreasing sequence $u_n$ such that, cf.~(\ref{2.67}),
\begin{equation}\label{0.8}
u_{n+1} = \Big( 1 + \mbox{\f $\dis\frac{1}{\log L_n}$}\Big)^{-1} \;u_n, \;\mbox{for $n \ge 0$} \,.
\end{equation}

\n
As a result of (\ref{0.7}) this sequence converges to a positive value $u_\infty > 0$. The recurrence relation is based on Proposition 2.1 and hinges on the ``sprinkling technique'' of \cite{Szni07a}, where more independent paths are thrown in, with the purpose of dominating long range interactions present in the model. In the proof of Theorem 4.3 of \cite{Szni07a}, when showing $u_* > 0$, for $d \ge 7$, these long range interactions could be bounded in a rather primitive way, with not too dire consequences thanks to the assumption $d \ge 7$. An important contribution of the present work is that we are able to control these interactions even in the case of small dimension, see also Remark \ref{rem2.3} 2). The result of the renormalization scheme, cf.~Theorem \ref{theo2.5}, is that for suitable dimension dependent constants $c, c^\prime, c^{\prime\prime}$, if we can find
\begin{equation}\label{0.9}
\mbox{$L_0 \ge c$ and $u_0 \ge c^\prime \;\mbox{\f $\dis\frac{(\log L_0)^2}{L_0^{d-2}}$}$ such that $q_0 (u_0) \le c^{\prime\prime} \,L_0^{-(1 + 2a)}$}\,,
\end{equation}
then
\begin{equation}\label{0.10}
\mbox{for all $n \ge 0$, $q_n(u_n) \le c^{\prime\prime} \,L_n^{-(1 + 2a)}$}\,.
\end{equation}

\medskip\n
This procedure essentially reduces the proof of (\ref{0.5}) to checking (\ref{0.9}). This step is carried out in Theorem 3.1 where it is shown that
\begin{equation}\label{0.11}
\mbox{$\lim\limits_{L_0 \rightarrow \infty} \;L_0^\rho \;q_0(u_0) = 0$, for all $\rho > 0$, with $u_0 = \mbox{\f $\dis\frac{c^\prime(\log L_0)^2}{L_0^{d-2}}$}$} \;.
\end{equation}

\n
The two-dimensional character of the event in the right-hand side of (\ref{0.6}) plays here a crucial role. Replacing $\IZ^2$ with $\IZ^d$ would still lead to a rather similar recurrence relation between $q_{n+1}(u_{n+1})$ and $q_n(u_n)$. However one could not initiate the induction in this modified set-up, cf.~Remark \ref{rem2.6}, (and (\ref{0.11}) would be replaced with $\lim_{L_0 \r \infty} q_0(u_0) = 1$). Interestingly the proof of (\ref{0.11}) relies on arguments reminiscent of some of the steps that appear in the derivation of lower bounds on the disconnection times of discrete cylinders by random walks, see Section 2 of \cite{DembSzni06} or Section 5 of \cite{Szni08}.

\medskip
We will now describe the organization of this article.

\medskip
In Section 1 we introduce some notation and recall useful facts concerning random interlacements.

\medskip
In Section 2 we develop the renormalization scheme. The induction step is carried out in Proposition 2.1. The application of the induction step to the proof of the fact that (\ref{0.10}) is a consequence of (\ref{0.9}) appears in Proposition 2.4 and Theorem 2.5.

\medskip
In Section 3 we prove (\ref{0.11}) in Theorem \ref{theo3.1}. This enables to initiate the induction and yields (\ref{0.10}) for a decreasing sequence with positive limit $u_\infty$. As a consequence we show in Theorem \ref{theo3.4} that for small $u > 0$, $\IP$-almost surely $\cV^u \cap \IZ^2$ percolates, which in particular yields (\ref{0.5}).

\medskip
Finally let us explain the convention we use for constants. Throughout the text $c$ or $c^\prime$ denote positive constants which solely depend on $d$, with values changing from place to place. The numbered constants $c_0,c_1,\dots$ are fixed and refer to the value at their first appearance in the text. Dependence of constants on additional parameters appears in the notation.

\section{Notation and some facts about random interlacements}
\setcounter{equation}{0}

The main purpose of this section is to introduce additional notation and recall various useful facts concerning random interlacements.

\medskip
We let $|\cdot|$ and $|\cdot|_\infty$ respectively stand for the Euclidean and the $\ell^\infty$-distance on $\IZ^d$. Unless explicitly mentioned we assume $d \ge 3$ throughout the article. We say that $x,y$ in $\IZ^d$ are neighbors, respectively $*$-neighbors, if $|x - y| = 1$, respectively $|x - y|_\infty = 1$. By finite path, respectively finite $*$-path, we mean a sequence $x_0,x_1,\dots,x_N$ in $\IZ^d$, $N \ge 0$, such that $x_i$ and $x_{i+1}$ are neighbors, respectively $*$-neighbors, for each $0 \le i < N$. We also sometimes write path, or $*$-path, in place of finite path, or finite $*$-path, when this causes no confusion. With $B(x,r)$ and $S(x,r)$ we denote the closed $|\cdot |_\infty$-ball and $|\cdot |_\infty$-sphere with radius $r \ge 0$ and center $x \in \IZ^d$. For $A,B$ subsets of $\IZ^d$ we write $A + B$ for the set of elements $x + y$ with $x$ in $A$ and $y$ in $B$, and $d(A,B) = \inf\{|x-y|_\infty$; $x \in A, y \in B\}$, for the mutual $\ell^\infty$-distance between $A$ and $B$. When $A$ is a singleton $\{x\}$, we write $d(x,B)$ for simplicity. The notation $U \subset \subset \IZ^d$ means that $U$ is a finite subset of $\IZ^d$. Given $U$ subset of $\IZ^d$ we denote with $|U|$ the cardinality of $U$, with $\partial U$ the boundary of $U$ and $\partial_{\rm int} U$ the interior boundary of $U$:
\begin{equation}\label{1.1}
\partial U = \{x \in U^c; \exists y \in U, \,|x-y| = 1\}, \;\;\partial_{\rm int} U = \{x \in U; \exists y \in U^c, \,|x-y| = 1\}\,.
\end{equation}

\n
The canonical basis of $\IR^d$ is denoted with $(e_i)_{1 \le i \le d}$, and we tacitly identify $\IZ^2$ with $\IZ e_1 + \IZ e_2 \subseteq \IZ^d$.

\medskip
We write $W_+$ for the space of nearest neighbor $\IZ^d$-valued trajectories defined for non-negative times and tending to infinity. We denote with $\cW_+$, $X_n, n \ge 0$, and $\cF_n, n \ge 0$, the canonical $\sigma$-algebra, the canonical process and canonical filtration on $W_+$. We let $\theta_n, n \ge 0$, stand for the canonical shift on $W_+$. Since $d \ge 3$, simple random walk on $\IZ^d$ is transient and for $x \in \IZ^d$ we denote with $P_x$ the restriction of the canonical law of simple random walk starting at $x$ to the set $W_+$, which has full measure. We write $E_x$ for the corresponding expectation. We also define $P_\rho = \sum_{x \in \IZ^d} \rho(x) \,P_x$  when $\rho$ is a measure on $\IZ^d$, and write $E_\rho$ for the corresponding expectation. Given $U \subseteq \IZ^d$, we let $H_U, \wt{H}_U, T_U$ stand for the respective entrance time, hitting time of $U$, and exit time from $U$:
\begin{equation}\label{1.2}
\begin{split}
H_U & = \inf\{n \ge 0; \;X_n \in U\}, \;\wt{H}_U = \inf\{n \ge 1; X_n \in U\},  \;\mbox{and}
\\[1ex]
T_U & = \inf\{n \ge 0; \,X_n \notin U\}\,.
\end{split}
\end{equation}

\n
In case of a singleton $U = \{x\}$, we write $H_x$ or $\wt{H}_x$ for simplicity.

\medskip
We denote with $g(\cdot, \cdot)$ the Green function of the walk
\begin{equation}\label{1.3}
g(x,y) = \dsl_{n \ge 0} \;P_x [X_n = y], \;x, y \in \IZ^d\,,
\end{equation}

\n
which is symmetric in its two variables, and $g(y) = g(0,y)$ so that $g(x,y) = g(y-x)$, due to translation invariance. Given $K \subset \subset \IZ^d$ we write $e_K$ for the equilibrium measure of $K$ and cap$(K)$ for the capacity of $K$, so that:
\begin{align}
e_K(x) & = P_x [\wt{H}_K = \infty], \;\mbox{for $x \in K$}\,,\label{1.4}
\\[1ex]
& = 0, \; \mbox{for $x \notin K$, and}\nonumber
\\[1ex]
{\rm cap}(K) & = e_K(\IZ^d) = \dsl_{x \in K} \,P_x [\wt{H}_K = \infty]\,. \label{1.5}
\end{align}

\n
It is straightforward to see from (\ref{1.5}) that the capacity is subadditive in the sense that ${\rm cap}(K \cup K^\prime) \le {\rm cap}(K) + {\rm cap}(K^\prime)$ for $K, K^\prime$ finite subsets of $\IZ^d$. Further the probability to enter $K$ can be expressed as
\begin{equation}\label{1.6}
P_x [H_K < \infty] = \dsl_{y \in K} g(x,y) \,e_K(y), \;\mbox{for} \;x \in \IZ^d\,.
\end{equation}

\n
One also has the bounds, (see for instance (\ref{1.9}) of \cite{Szni07a}):
\begin{equation}\label{1.7}
\dsl_{y \in K} g(x,y) / \sup\limits_{z \in K} \,\Big(\dsl_{y \in K} g(z,y)\Big) \le P_x [H_K < \infty] \le \dsl_{y \in K} g(x,y) / \inf\limits_{z \in K} \,\Big(\dsl_{y \in K} g(z,y)\Big)\,,
\end{equation}

\n
for $x$ in $\IZ^d$, from which can infer with the help of classical bounds on the Green function, cf.~\cite{Lawl91}, p.~31, that
\begin{equation}\label{1.8}
c\,L^{d-2} \le {\rm cap}\big(B(0,L)\big) \le c^\prime \, L^{d-2}, \;\mbox{for} \;L \ge 1\,.
\end{equation}

\n
To introduce random interlacements we need some further objects. We denote with $W$ the space of doubly infinite nearest neighbor $\IZ^d$-valued trajectories, which tend to infinity at positive and negative infinite times, and with $W^*$ the space of equivalence classes of trajectories in $W$ modulo time-shift. The canonical projection from $W$ onto $W^*$ is denoted by $\pi^*$. We endow $W$ with its canonical $\sigma$-algebra generated by the canonical coordinates $X_n, n \in \IZ$, and $W^*$ with $\cW^* = \{A \subseteq W^*$; $(\pi^*)^{-1} (A) \in \cW\}$, the largest $\sigma$-algebra on $W^*$ for which $\pi^*: (W, \cW) \rightarrow (W^*, \cW^*)$ is measurable.

\medskip
We will now describe the space $(\Omega, \cA, \IP)$ where random interlacements are defined. We consider the space of point measures on $W^* \times \IR_+$: 
\begin{align}
\Omega  = \Big\{ &\omega  = \dsl_{i \ge 0} \,\delta_{(w_i^*, u_i)}, \;\mbox{with $(w_i^*, u_i) \in W^* \times \IR_+$, for $i \ge 0$, and}\label{1.9}
\\[-0.5ex]
&\o (W^*_K \times [0,u]) < \infty, \;\mbox{for any} \;K \subset \subset \IZ^d, \,u \ge 0\Big\}\,, \nonumber
\end{align}

\n
where for $K \subset \subset \IZ^d$, $W^*_K \subseteq W^*$ is the set of trajectories modulo times-shift which enter $K$:
\begin{equation}
\mbox{$W^*_K = \pi^* (W_K)$, and $W_K = \{w \in W$, for some $n \in \IZ$, $X_n (w) \in K\}$}\,.
\end{equation}

\n
We endow $\Omega$ with the $\sigma$-algebra $\cA$ generated by the evaluation maps $\o \rightarrow \o(D)$, where $D$ runs over the product $\sigma$-algebra $\cW^* \times \cB(\IR_+)$. We denote with $\IP$ the probability on $(\Omega, \cA)$, which is the Poisson point measure with intensity $\nu(d w^*) du$, giving finite mass to the sets $W^*_K \times [0, u]$, for $K \subset \subset \IZ^d$, $u \ge 0$, where $\nu$ is the unique $\sigma$-finite measure on $(W^*, \cW^*)$ such that for any $K \subset \subset \IZ^d$, cf.~Theorem 1.1. of \cite{Szni07a}:
\begin{equation}\label{1.11}
1_{W^*_K} \,\nu = \pi^* \circ Q_K\,,
\end{equation}

\n
with $Q_K$ the finite measure on $W^0_K$, the subset of $W_K$ of trajectories which enter $K$ for the first time at time $0$, such that for $A, B \in \cW_+$, $x \in \IZ^d$, (see (\ref{1.4}) for the notation):
\begin{equation}\label{1.12}
Q_K [(X_{-n})_{n \ge 0} \in A, \;X_0 = x, \;(X_n)_{n \ge 0} \in B] = P_x [A \,|\,\wt{H}_K = \infty] \;e_K(x) \,P_x[B]\,.
\end{equation}

\n
Given $K \subset \subset \IZ^d$, $u \ge 0$, one further defines on $(\Omega, \cA)$ the random point process with values in the set of finite point measures on  $(W_+, \cW_+)$:
\begin{equation}\label{1.13}
\mu_{K,u}(\o) = \dsl_{i \ge 0} \,\delta_{(w_i^*)^{K,+}}  1_{\{w_i^* \in W^*_K, u_i \le u\}}, \;\mbox{for} \;\o = \dsl_{i \ge 0}\,\delta_{(w_i^*,u_i)}\,,
\end{equation}

\n
where $(w^*)^{K,+}$ stands for the trajectory in $W_+$ which follows step by step $w^* \in W^*_K$ from the time it first enters $K$. One then knows from Proposition 1.3 of \cite{Szni07a} that for $K \subset \subset \IZ^d$:
\begin{equation}\label{1.14}
\mbox{$\mu_{K,u}$ is a Poisson point process on $(W_+,\cW_+)$ with intensity measure $u\,P_{e_K}$}\,,
\end{equation}

\n
where the notation has been introduced above (\ref{1.2}). When $0 \le u^\prime < u$, and $K \subset \subset \IZ^d$, one can define $\mu_{K, u^\prime, u}(\o)$ for $\o \in \Omega$, in analogy to (\ref{1.13}), simply replacing the inequality $u_i \le u$, by the condition $u^\prime < u_i \le u$ in the formula for $\mu_{K,u}(\o)$. Once then finds that for $0 \le u^\prime < u$ and $K \subset \subset \IZ^d$:
\begin{equation}\label{1.15}
\begin{split}
& \mbox{$\mu_{K,u^\prime,u}$ and $\mu_{K,u^\prime}$ are independent Poisson point processes on $(W_+, \cW_+)$}
\\
&\mbox{with respective intensity measures $(u - u^\prime) \,P_{e_K}$ and $u^\prime \,P_{e_K}$}\,.
\end{split}
\end{equation}

\n
Given $\o \in \Omega$, the interlacement at level $u \ge 0$, is the subset of $\IZ^d$:
\begin{equation}\label{1.16}
\begin{split}
\cI^u(\o) & = \bigcup\limits_{u_i \le u} \;{\rm range} \;(w_i^*), \;\mbox{if} \;\o = \dsl_{i \ge 0} \,\delta_{(w_i^*,u_i)}\,,
\\
& = \bigcup\limits_{K \subset \subset \IZ^d} \; \bigcup\limits_{w \in {\rm Supp} \,\mu_{K,u}(\o)} w(\IN)\,,
\end{split}
\end{equation}

\n
where for $w^* \in W^*$, range $(w^*) = w(\IZ)$, for any $w \in W$, with $\pi^*(w) = w^*$, and the notation Supp $\mu_{K,u}(\o)$ refers to the support of the point measure $\mu_{K,u}(\o)$. The vacant set at level $u$ is then defined as
\begin{equation}\label{1.17}
\cV^u(\o) = \IZ^d \backslash \cI^u(\o), \;\mbox{for} \;\o \in \Omega, \;u \ge 0\,.
\end{equation}
One then finds, see (1.54) of \cite{Szni07a}, that
\begin{equation}\label{1.18}
\mbox{$\cI^u(\o) \cap K = \bigcup\limits_{w \in {\rm Supp} \,\mu_{K,u}(\o)} w(\IN) \cap K$, for $K \subset \subset \IZ^d$, $u \ge 0$, $\o \in \Omega$}\,,
\end{equation}

\n
and it follows with (\ref{1.14}) that for $u \ge 0$,
\begin{equation}\label{1.19}
\IP[\cV^u \supseteq K] = \exp\{- u \,{\rm cap} (K)\}, \;\mbox{for all} \;K \subset \subset \IZ^d\,,
\end{equation}

\n
a property which leads to the characterization by (\ref{0.1}) of the law $Q_u$ on $\{0,1\}^{\IZ^d}$ of the random subset $\cV^u$ of $\IZ^d$, see Remark 2.2 2) of \cite{Szni07a}. As mentioned in the introduction $Q_u$ is ergodic under spatial translations, cf.~Theorem 2.1 of \cite{Szni07a}, and for $u > 0$, $\cI^u(\o)$ is $\IP$-a.s. an infinite connected subset of $\IZ^d$, cf.~Corollary 2.3 of \cite{Szni07a}.  Intuitively it can be thought of as a ``random fabric''.

\begin{remark}\label{rem1.1} \rm
Since our principal objective is to prove that when $u > 0$ is small $\cV^u$ percolates, it is important to point out that $Q_u$ does not dominate any product of non-degenerate iid Bernoulli variables. Indeed one knows from Remark 2.5 2) of \cite{Szni07a} that for $u > 0$, and $L \ge c(u)$, (see below (\ref{0.1}) for the notation):
\begin{equation}\label{1.20}
\IP[\cI^u \supseteq B(0,L)] = Q_u [Y_x = 0, \;\mbox{for all} \;x \in B(0,L)] \ge c\,\exp\{- c \,L^{d-2} \log L\}\,.
\end{equation}

\n
This shows that the probability that $\cV^u \cap B(0,L)  = \phi $ is rather ``fat''. In particular the law $Q_u$ of the indicator function of $\cV^u$ cannot stochastically dominate, see \cite{Ligg85}, p.~71-74, the law of iid non-degenerate Bernoulli variables indexed by $\IZ^d$. Note that when $d = 3$, the same argument even proves that the law of the indicator function of $\cV^u \cap \IZ^2$ cannot stochastically dominate the law of iid non-degenerate Bernoulli variables indexed by $\IZ^2$. This rather high probability of absence of $\cV^u$ in large boxes makes it difficult to prove that $\cV^u$ percolates for small $u > 0$, with a strategy based on dynamical or static renormalization in the spirit of Chapter 7 of \cite{Grim99}.

\medskip
Incidentally note that with (\ref{0.1}) and (\ref{1.8}) one also finds
\begin{equation}\label{1.21}
\IP [\cV^u \supseteq B(0,L)] = Q_u [Y_x = 1, \;\mbox{for all} \;x\in B(0,L)] \ge \exp\{- c \,L^{d-2}\}\,,
\end{equation}

\n
and the probability that $\cV^u$ covers $B(0,L)$ is rather ``fat'' as well. Just as above we conclude that $Q_u$ is not stochastically dominated by the law of non-degenerate iid Bernoulli variables indexed by $\IZ^d$. \hfill $\square$
\end{remark}

\section{The induction step}
\setcounter{equation}{0}

In this section we develop a renormalization scheme which aims at showing that the sequence of probabilities $q_n(u_n)$ that $\cI^{u_n} \cap \IZ^2$ contains a $*$-path between a given block of side-length of order $L_n$ and the complement of its $L_n$-neighborhood, tends to zero. The sequence of length scales $L_n$, $n \ge 0$, grows rapidly to infinity, cf.~(\ref{2.2}), whereas $u_n$, $n \ge 0$, is a decreasing sequence of levels tending to $u_\infty > 0$, cf.~(\ref{2.67}). The heart of the matter in this section is the derivation of a recurrence relation enabling the control of $q_{n+1}(u_{n+1})$ in terms of $q_n(u_n)$, cf.~(\ref{2.65}). For this purpose we use a ``sprinkling technique'': trajectories of the interlacement with levels in $(u_{n+1}, u_n]$ are used to dominate the long range interactions in the problem and restore some independence, see (\ref{2.61}). The main result of this section is Theorem \ref{theo2.5}. It reduces the task of proving a quantitative decay to zero of the sequence $q_n(u_n)$ to the question of being able to initiate the recurrence, see (\ref{0.11}). Most of the work in the derivation of Theorem \ref{theo2.5} is carried out in Proposition \ref{prop2.1} and Proposition \ref{prop2.4}. We first need some notation.

\medskip
We introduce a sequence of length scales as follows. We set
\begin{equation}\label{2.1}
a = \mbox{\f $\dis\frac{1}{100}$} \,,
\end{equation}
and given $L_0 > 1$, define by induction
\begin{equation}\label{2.2}
L_{n+1} = \ell_n \,L_n, \;\mbox{for $n \ge 0$, where $\ell_n = 100 [L^a_n] + 1 \;( \ge L_n^a)$}\,.
\end{equation}

\medskip\n
We then introduce for each $n$ a collection of pairwise disjoint $d$-dimensional boxes covering $\IZ^2$, where we recall the convention introduced below (\ref{1.1}). The index set of labels of boxes at level $n$ is
\begin{equation}\label{2.3}
I_n = \{ m = (n,i); \;i \in \IZ^2\}, \;\mbox{for $n \ge 0$}\,,
\end{equation}

\medskip\n
and to each $m = (n,i) \in I_n$ we attach the $d$-dimensional boxes
\begin{align}
C_m & = \big([-L_n, L_n)^d + 2 L_n i\big) \cap \IZ^d \label{2.4}
\\[1ex]
\wt{C}_m & = \bigcup\limits_{m^\prime \in I_n: d (C_{m^\prime}, C_m) \le 1} C_{m^\prime} = \big([-3 L_n, 3 L_n\big)^d + 2 L_n i) \cap \IZ^d\,. \label{2.5}
\end{align}

\medskip\n
So for each $n \ge 0$, the boxes $C_m, m \in I_n$, are pairwise disjoint and their union covers $\IZ^2$. As for $\wt{C}_m$ it is the union of $C_m$ and the $\ell^\infty$-neighboring boxes of $C_m$ at level $n$. Note that one has the following ``hierarchical'' property: given $m \in I_{n+1}$, the trace of $C_m$ on $\IZ^2$ is partitioned by the respective traces on $\IZ^d$ of the boxes at level $n$ contained in $C_m$:
\begin{equation}\label{2.6}
C_m \cap \IZ^2 = \bigcup\limits_{m^\prime \in I_n:  C_{m^\prime} \subseteq C_m} \, C_{m^\prime} \cap \IZ^2, \;\;\mbox{for $n \ge 0$, and $m \in I_{n+1}$} \,.
\end{equation}

\n
Indeed using the fact that $\ell_n$ is odd, cf.~(\ref{2.2}), one simply writes 
\begin{equation*}
[-L_{n+1}, L_{n+1}) = \bigcup\limits_{k \;{\rm odd}; - \ell_n \le k < \ell_n} [k\,L_n, (k+2) \,L_n) = \bigcup\limits_{\ell \;{\rm even}; - \ell_n < \ell < \ell_n} \big([-L_n, L_n) + \ell \,L_n\big)\,,
\end{equation*}

\medskip\n
and inserts this identity ``coordinatewise'' into (\ref{2.4}). Given $u \ge 0$, $n \ge 0$ and $m \in I_n$, one defines the event
\begin{equation}\label{2.7}
\mbox{$B^u_m =$ there is a $*$-path from $C_m$ to $\partial_{\rm int} \,\wt{C}_m$ in $\cI^u \cap \IZ^2$}\,,
\end{equation}

\medskip\n
(we refer to the beginning of Section 1 for the notation). One also defines the probability:
\begin{equation}\label{2.8}
\mbox{$q_n(u) = \IP[B^u_m]$, where $m \in I_n$ is arbitrary}\,,
\end{equation}

\n
and we have used the translation invariance of $Q_u$, see (\ref{0.1}).

\medskip
As already mentioned we aim at deriving a bound of $q_{n+1} (u_{n+1})$ in terms of $q_n(u_n)$, along certain decreasing sequences $u_n$ satisfying $u_\infty = \lim_n \,u_n > 0$. With this objective in mind, it is convenient to introduce for $n \ge 0$, the collections $\cK_1$ and $\cK_2$ of labels of boxes at level $n$ contained in $\wt{C}_m$, where $m = (n + 1, 0) \in I_{n+1}$, (i.e. $C_m$ is the box at level $n+1$ containing the origin):
\begin{align}
\cK_1 & = \Big\{m^\prime \in I_n; \;m^\prime = (n, i^\prime), \;\mbox{where $i^\prime = (i^\prime_1, i^\prime_2)$ with max $(|i^\prime_1|, |i^\prime_2|) = \mbox{\f $\dis\frac{\ell_n - 1}{2}$}\Big\}$} \label{2.9}
\\[-0.5ex]
& = \{m^\prime \in I_n; \;C_{m^\prime} \cap \partial_{\rm int} \;C_m \not= \phi\}\,,\nonumber
\\[2ex]
\cK_2 & = \{m^\prime \in I_n; \;m^\prime = (n, i^\prime), \;\mbox{where $i^\prime = (i^\prime_1, i^\prime_2)$ with max $(|i^\prime_1|, |i^\prime_2|) =\ell_n \}$} \label{2.10}
\\
& = \{m^\prime \in I_n; \;C_{{m^\prime}} \cap S(0,2\, L_{n+1}) \not= \phi\}\,.\nonumber
\end{align}

\begin{center}
\psfragscanon
\includegraphics{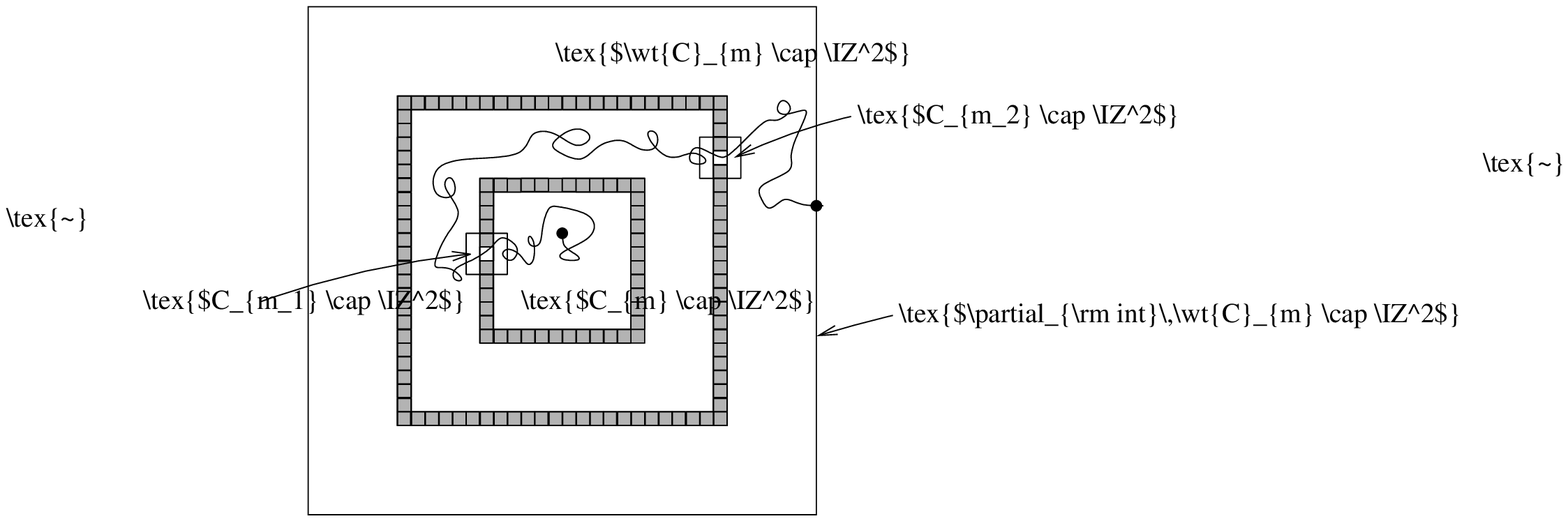}{1000}
\end{center}

\begin{center}
\begin{tabular}{ll}
Fig. 1: & An illustration of the event $B^u_m$ with a\\
&$*$-path in $\cI^u \cap \IZ^2$ from $C_m$ to $\partial_{\rm int} \,\wt{C}_m$.
\end{tabular}
\end{center}

\medskip\n
A key ingredient of the renormalization scheme comes from the following 

\begin{proposition}\label{prop2.1} $(d \ge 3)$

\medskip
For $L_0 \ge c$, for all $n \ge 0$ and
\begin{equation}\label{2.11}
0 < u^\prime \le \Big(1 + \dis\frac{c_0}{\ell_n^{d-2}}\Big)^{-1} \,u\,,
\end{equation}
one has
\begin{equation}\label{2.12}
q_{n+1}(u^\prime) \le c_1\,\ell_n^2 \big(q_n(u)^2 + u^\prime \,L_n^{-2} + e^{-c_2(u - u^\prime)L_n^{d-2}}\big)\,.
\end{equation}
\end{proposition}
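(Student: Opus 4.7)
My plan is to reduce the scale-$(n+1)$ event $B_m^{u'}$ to a conjunction of two well-separated scale-$n$ events, and then to decouple via the Poisson structure of random interlacements together with the sprinkling technique.

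\medskip
First, I would carry out a geometric reduction. Fix $m=(n+1,0)\in I_{n+1}$; by translation invariance we may center the argument at the origin. Any $*$-path witnessing $B_m^{u'}$ lies in $\cI^{u'}\cap\IZ^2$, starts in $C_m$, and must reach $\partial_{\rm int}\tilde C_m$; hence it meets $\partial_{\rm int}C_m$ at some first time and meets $S(0,2L_{n+1})$ at some later time. Using the hierarchical decomposition (\ref{2.6}), I identify a scale-$n$ box $C_{m_1}$ with $m_1\in\cK_1$ containing the first meeting point, and a scale-$n$ box $C_{m_2}$ with $m_2\in\cK_2$ containing the second. For $L_0$ large enough $\ell_n$ is large so that $\tilde C_{m_1},\tilde C_{m_2}$ are disjoint, and on each $\tilde C_{m_i}$ the sub-path from the first entry into $C_{m_i}$ up to the first subsequent exit of $\tilde C_{m_i}$ witnesses $B_{m_i}^{u'}$. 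Since $|\cK_1|,|\cK_2|\le c\,\ell_n$, a union bound gives
\[
\IP[B_m^{u'}]\le \sum_{m_1\in\cK_1}\sum_{m_2\in\cK_2}\IP\big[B_{m_1}^{u'}\cap B_{m_2}^{u'}\big],
\]
producing the prefactor $c\,\ell_n^2$.

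\medskip
The heart of the proof is an approximate factorization of each joint probability as $q_n(u)^2$ plus small remainders. Set $K_i=\tilde C_{m_i}$ and $K=K_1\cup K_2$; the event $B_{m_i}^{u'}$ is measurable with respect to $\cI^{u'}\cap K_i$, which by (\ref{1.18}) is a deterministic functional of the Poisson point process $\mu_{K,u'}$ of intensity $u'P_{e_K}$ (cf.~(\ref{1.14})). I would split $\mu_{K,u'}$ into three independent Poisson sub-processes, according to whether the trajectory hits only $K_1$, only $K_2$, or both; only the ``both'' class couples the two sides. Combining (\ref{1.6})--(\ref{1.8}) with $d(K_1,K_2)\asymp L_{n+1}$ and ${\rm cap}(K_i)\asymp L_n^{d-2}$, the mean of this ``both'' class is of order $u'L_n^{d-2}\ell_n^{-(d-2)}$. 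On the complementary event, the two one-sided restrictions are independent and the naive factorized bound is $q_n(u')^2$.

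\medskip
To upgrade the level $u'$ to $u$ inside this factorization, I would use the independent sprinkled Poisson process $\mu_{K,u',u}$ of intensity $(u-u')P_{e_K}$, see (\ref{1.15}): these extra trajectories both dominate the troublesome ``both''-class trajectories and, via a coupling argument, enrich each one-sided level-$u'$ restriction so that it stochastically dominates an independent copy of the level-$u$ picture on $K_i$. The hypothesis (\ref{2.11}) is calibrated so that $(u-u'){\rm cap}(K_i)$ exceeds the mean of the ``both'' class (both of order $u'L_n^{d-2}\ell_n^{-(d-2)}$), which is precisely what makes the coupling feasible; Poisson concentration then bounds the coupling's failure probability by $c\exp(-c_2(u-u')L_n^{d-2})$. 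Putting these ingredients together yields
\[
\IP\big[B_{m_1}^{u'}\cap B_{m_2}^{u'}\big]\le q_n(u)^2+c\,u'L_n^{-2}+c\,\exp(-c_2(u-u')L_n^{d-2}),
\]
the polynomial remainder $u'L_n^{-2}$ arising from a refinement of the residual ``both''-trajectory contribution that exploits the two-dimensional $\IZ^2$-geometry of the witnessing $*$-paths; multiplication by $c\,\ell_n^2$ then gives (\ref{2.12}).

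\medskip
The principal obstacle is the sprinkling step itself. The algebraic (rather than exponential) decay of interlacement correlations forbids any naive product estimate, and the whole argument hinges on the delicate quantitative balance (\ref{2.11}) between the sprinkling reservoir $(u-u){\rm cap}(K_i)\asymp (u-u')L_n^{d-2}$ and the long-range coupling mass $u'L_n^{d-2}\ell_n^{-(d-2)}$ carried by the trajectories that link $K_1$ to $K_2$. This is the subtlety Remark \ref{rem2.3} will highlight as the main advance over the estimates in \cite{Szni07a}.
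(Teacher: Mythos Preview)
Your high-level plan---the geometric reduction to pairs $(m_1,m_2)\in\cK_1\times\cK_2$, the splitting of $\mu_{K,u'}$ according to which boxes are visited, and the use of the sprinkled process $\mu_{K,u',u}$ to absorb the coupling trajectories---matches the paper's strategy. But two points are genuinely off.

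\medskip
\textbf{The source of $u'L_n^{-2}$.} This term has nothing to do with the two-dimensional $\IZ^2$-geometry; Proposition~\ref{prop2.1} is insensitive to the planar restriction (cf.~Remark~\ref{rem2.6}~1)). In the paper the polynomial remainder comes from truncating the number of excursions a ``both'' trajectory makes between $V=\wt C_{m_1}\cup\wt C_{m_2}$ and the complement of an intermediate enlargement $U$ of $V$ of scale $L_{n+1}$. One fixes a cutoff $r$ (in fact $r=3/a=300$), and the trajectories with more than $r$ returns form a Poisson process $\bar\rho$ of total mass $\le c\,u'\,c_3^r\,L_n^{(d-2)(1-ar)}$, which with this choice of $r$ is at most $c\,u'L_n^{-2}$; this is the bound $\IP[\bar\cI\neq\emptyset]\le\bar\xi(W_+)$. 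Your proposal leaves unexplained how a quantity of order $u'L_n^{d-2}\ell_n^{-(d-2)}$ (the mean number of ``both'' trajectories, which is \emph{large}) gets refined down to $u'L_n^{-2}$.

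\medskip
\textbf{The missing mechanism for the domination.} Saying that the sprinkled trajectories ``dominate the troublesome `both'-class trajectories'' hides the real work. A single ``both'' trajectory visits \emph{both} $\wt C_{m_1}$ and $\wt C_{m_2}$, possibly many times, so it cannot be dominated by a single independent one-box trajectory. The paper introduces the intermediate set $U$, records for each ``both'' trajectory its list of at most $r$ excursions $(w_1,\dots,w_\ell)$ between successive returns to $V$ and departures from $U$, and proves via a Harnack-inequality estimate (Lemma~\ref{lem2.2}) that the law of this list is dominated by a constant times the $\ell$-fold \emph{product} of the single-excursion law $P_{\bar e_V}[(X_\cdot)_{0\le\cdot\le T_U}\in\cdot]$. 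Only with this product structure can one build an explicit coupling (comparing Poisson counts $N'_\ell$ against $N^*_\ell$) in which the union of excursion ranges of the ``both'' trajectories is contained in the range of the sprinkled one-box trajectories $\mu^*_{1,1}+\mu^*_{2,2}$. The exponential error $e^{-c_2(u-u')L_n^{d-2}}$ is the failure probability of \emph{this} Poisson comparison, and the condition (\ref{2.11}) is what guarantees $\lambda^*_\ell\ge 4r\lambda'_\ell$ so that the comparison succeeds with high probability. Without the excursion decomposition and Lemma~\ref{lem2.2}, the coupling you allude to has no concrete construction.
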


\begin{proof}
We consider $n \ge 0$, $0 < u^\prime < u$ and $m = (n+1,0) \in I_{n+1}$, as above (\ref{2.9}). Observe that any $*$-path in $\IZ^2$ from $C_m$ to $\partial_{\rm int} \,\wt{C}_m$ necessarily meets some $C_{m_1}$ with $m_1$ in $\cK_1$ and some $C_{m_2}$ with $m_2$ in $\cK_2$, and is neither contained in $\wt{C}_{m_1}$ nor $\wt{C}_{m_2}$, so that with a rough counting argument
\begin{equation}\label{2.13}
q_{n+1} (u^\prime) \le c\,\ell^2_n \,\sup\limits_{m_1,m_2} \,\IP[B^{u^\prime}_{m_1} \cap B^{u^\prime}_{m_2}]\,,
\end{equation}

\n
where the supremum runs over $m_1$ in $\cK_1$ and $m_2$ in $\cK_2$. Given such $m_1$ and $m_2$ we write
\begin{equation}\label{2.14}
V = \wt{C}_{m_1} \cup \wt{C}_{m_2}\,,
\end{equation}

\medskip\n
and introduce the decomposition (in the notation of (\ref{1.13})):
\begin{equation}\label{2.15}
\mu_{V,u}  = \mu_{1,1} + \mu_{1,2} + \mu_{2,1} + \mu_{2,2}\,,
\end{equation}
where for $i,j$ distinct in $\{1,2\}$ we have set
\begin{equation}\label{2.16}
\begin{split}
\mu_{i,j} & = 1_{\{X_0 \in \wt{C}_{m_i}, \;H_{\wt{C}_{m_j}} < \infty\}}\,\mu_{V,u}\,,
\\[1ex]
\mu_{i,i} & = 1_{\{X_0 \in \wt{C}_{m_i}, \;H_{\wt{C}_{m_j}} = \infty\}}\,\mu_{V,u}\,.
\end{split}
\end{equation}

\n
Similarly when $\mu_{V,u^\prime}$ and $\mu_{V,u^\prime,u}$, (see above (\ref{1.15})), play the role of $\mu_{V,u}$ one obtains the identities (with hopefully obvious notations):
\begin{align}
\mu_{V,u^\prime} & = \mu^\prime_{1,1} + \mu^\prime_{1,2} + \mu^\prime_{2,1} + \mu^\prime_{2,2}\,,\label{2.17}
\\[1ex]
\mu_{V,u^\prime,u} & = \mu^*_{1,1} + \mu^*_{1,2} + \mu^*_{2,1} + \mu^*_{2,2}\,,\label{2.18}
\end{align}
together with
\begin{equation}\label{2.19}
\mu_{i,j} = \mu^\prime_{i,j} + \mu^*_{i,j}, \;\;\mbox{for} \;1 \le i,j \le 2\,.
\end{equation}

\n
In view of (\ref{1.15}) we also see that
\begin{equation}\label{2.20}
\mbox{$\mu^\prime_{i,j}$, $\mu^*_{i,j}$, $1 \le i, j \le 2$, are independent Poisson point processes on $W_+$}\,.
\end{equation}

\n
Roughly speaking the possible dependence between $B^{u^\prime}_{m_1}$ and $B^{u^\prime}_{m_2}$ in the right-hand side of (\ref{2.13}) originates from the contribution of $\mu^\prime_{1,2} + \mu^\prime_{2,1}$, see (\ref{2.17}), when one considers the trace on $V \cap \IZ^2$ of the trajectories in the support of $\mu_{V,u^\prime}$. In essence our strategy is to exhibit some ``domination'' of the trace on $V \cap \IZ^2$ of trajectories in the support of $\mu^\prime_{1,2} + \mu^\prime_{2,1}$ in terms of the corresponding trace of trajectories in the support of $\mu^*_{1,1} + \mu^*_{2,2}$ and  ``correction terms'', when $u$ is sufficiently bigger than $u^\prime$. This is the sprinkling technique and it will produce a decoupling and a natural control of $q_{n+1}(u^\prime)$ in terms of $q_n(u)^2$ thanks to (\ref{2.19}), (\ref{2.20}).

\medskip
We now introduce the $\ell^\infty$-neighborhood of size $[\frac{L_{n+1}}{10}]$ of $V = \wt{C}_{m_1} \cup \wt{C}_{m_2}$:
\begin{equation}\label{2.21}
U = \Big\{z \in \IZ^d; \;d(z,V) \le \mbox{\f $\dis\frac{L_{n+1}}{10}$}\Big\}\,.
\end{equation}

\medskip\n
This set is the union of two disjoint boxes which are translates of $[-R,R)^d$, with $R = 3 L_n + [\frac{L_{n+1}}{10}]$, and the $\ell^\infty$-norm of the vector translating one to the other box is at least $L_{n+1}$.

\medskip
For a trajectory in $W_+$, the times $R_k$, $k \ge 1$, of successive returns to $V$, and $D_k$, $k \ge 1$, of successive departures from $U$ are defined as, (see (\ref{1.2}) for the notation):
\begin{equation}\label{2.22}
\begin{split}
R_1 & = H_V, \,D_1 = T_U \circ \theta_{R_1} + R_1, \;\mbox{and for $k \ge 1$}\,,
\\[1ex]
R_{k+1} & = R_1 \circ \theta_{D_k} + D_k, \;D_{k+1} = D_1 \circ \theta_{D_k} + D_k\,,
\end{split}
\end{equation}

\n
so that $0 \le R_1 \le D_1 \le \dots \le R_k \le D_k \le \dots \le \infty$. To control the dependence between the events in the right-hand side of (\ref{2.13}) we consider $r \ge 2$ and write:
\begin{equation}\label{2.23}
\mu^\prime_{1,2} + \mu^\prime_{2,1} = \dsl_{2 \le \ell \le r} \rho^\prime_\ell + \overline{\rho}\,,
\end{equation}
where
\begin{equation}\label{2.24}
\rho^\prime_\ell = 1_{\{R_\ell < \infty = R_{\ell + 1} \}} \;(\mu^\prime_{1,2} + \mu^\prime_{2,1}), \;\mbox{for $\ell \ge 1$}\,,
\end{equation}

\medskip\n
(note that $\rho^\prime_1 = 0$, since $(\mu^\prime_{1,2} + \mu^\prime_{2,1})$-a.s., $R_1 = 0$ and $R_2 < \infty$), and
\begin{equation}\label{2.25}
\ov{\rho} = 1_{\{R_{r+1} < \infty\}} (\mu^\prime_{1,2} + \mu^\prime_{2,1})\,,
\end{equation}

\medskip\n
($\ov{\rho}$ will be treated as a correction term, see (\ref{2.30}) and (\ref{2.61}) below). As as result of (\ref{2.20}) we see that
\begin{equation}\label{2.26}
\mbox{$\rho^\prime_\ell, 2 \le \ell \le r, \ov{\rho}, \mu^\prime_{1,1}, \mu^\prime_{2,2}$ are independent Poisson point processes on $W_+$}\,.
\end{equation}

\medskip\n
We denote with $\xi^\prime_\ell$ the intensity measure of $\rho^\prime_\ell$ and with $\ov{\xi}$ the intensity measure of $\ov{\rho}$. From the analogues of (\ref{2.16}) and (\ref{1.14}) with $u^\prime$ in place of $u$ we infer that
\begin{equation}\label{2.27}
\xi^\prime_\ell = u^\prime \,P_{e_V} [E, R_\ell < \infty = R_{\ell + 1}, \cdot ], \;2 \le \ell \le r\,,
\end{equation}
and that
\begin{equation}\label{2.28}
\ov{\xi} = u^\prime \,P_{e_V} [E, R_{r+1} < \infty, \,\cdot]\,,
\end{equation}

\medskip\n
where $E =\big\{X_0 \in \wt{C}_{m_1}, H_{\wt{C}_{m_2}} < \infty\} \cup \{X_0 \in \wt{C}_{m_2}, \,H_{\wt{C}_{m_1}} < \infty\big\}$.

\medskip
We first bound the total mass $\ov{\xi}(W_+)$ of $\ov{\xi}$. This is performed in an analogous fashion to (3.23)--(3.25) of \cite{Szni07a}. With (\ref{1.6}), (\ref{1.8}) and classical bounds on the Green function, cf.~\cite{Lawl91}, p.~31, we find that
\begin{equation}\label{2.29}
\sup\limits_{x \in U^c} \;P_x [H_V < \infty] \le \dis\frac{c_3}{\ell^{d-2}_n} \;.
\end{equation}

\medskip\n
Discarding the event $E$ in (\ref{2.28}) and applying the strong Markov property at times $D_r$, $D_{r-1}, \dots, D_1$, we find that
\begin{equation}\label{2.30}
\ov{\xi}(W_+) \le c\,u^\prime \,L_n^{d-2} \Big(\dis\frac{c_3}{\ell^{d-2}_n}\Big)^r \stackrel{(\ref{2.2})}{\le} c  \,c_3^r \;u^\prime \,L_n^{(d-2)(1-ar)}\,,
\end{equation}

\medskip\n
where we used the subadditivity property of the capacity, see below (\ref{1.5}), and the right-hand inequality of (\ref{1.8}) when bounding cap$(V)$, see (\ref{2.14}).

\medskip
We then turn our attention to the point measures $\rho^\prime_\ell$. We introduce the set of finite paths (see the beginning of Section 1 for the definition)
\begin{align}
\cT = \big\{ &\mbox{$w = \big(w(i)\big)_{0 \le i \le N}$ finite path; $w(0) \in V$, $w(N) \in \partial U$, and}\label{2.31}
\\
&\mbox{$w(i) \in U$, for $0 \le i < N\big\}$}\,.\nonumber
\end{align}

\medskip\n
Given $\ell \ge 1$, we define the map $\phi_\ell$ from $\{R_\ell < \infty = R_{\ell + 1}\} \subseteq W_+$ into $\cT^\ell$ such that:
\begin{align}
&w \rightarrow \phi_\ell(w) = (w_1,\dots,w_\ell), \;\mbox{with} \label{2.32}
\\
&w _k(\cdot) = \big(X_{R_k + \cdot} (w)\big)_{0 \le \cdot \le D_k(w) - R_k(w)}, \;\mbox{for} \;1 \le k \le \ell\,. \nonumber
\end{align}

\medskip\n
The next lemma yields a control on the probability that the walk hits $y \in V$ at its entrance in $V$ when it starts in $\partial U \cup \partial_{\rm int} U$.

\begin{lemma}\label{lem2.2}
\begin{equation}\label{2.33}
\sup\limits_{z \in \partial U \cup \partial_{\rm int} U} \,P_z[H_V < \infty, X_{H_V} = y] \le \dis\frac{c}{L^{d-2}_{n+1}} \,e_V(y), \;\mbox{for all $y \in V$}\,.
\end{equation}
\end{lemma}

\begin{proof}
From the inclusion $V \subseteq U$, one deduces the identity
\begin{equation}\label{2.34}
e_V(y) = P_{e_U} [H_V < \infty, \,X_{H_V}  = y], \;\mbox{for all $y \in V$}\,.
\end{equation}

\n
This ``sweeping'' identity can for instance be seen as the consequence of (1.46) of \cite{Szni07a}, for the intensity measures of the Poisson point processes under consideration there. Defining for $y \in V$ the non-negative harmonic function on $V^c$
\begin{equation*}
\psi(z) = P_z [H_V < \infty, \,X_{H_V} = y]\,,
\end{equation*}

\medskip\n
one finds with the left-hand inequality of (\ref{1.8}) and (\ref{2.34}) that
\begin{equation}\label{2.35}
e_V(y) \ge c\,L^{d-2}_{n+1} \;\inf\limits_{z \in \partial_{\rm int} U} \psi(z)\,.
\end{equation}

\n
With the Harnack inequality, cf.~\cite{Lawl91}, p.~42, and a covering argument of $\partial U \cup \partial_{\rm int} U$ by finitely many balls in $V^c$ with radius $c \,L_{n+1}$ and centers in $\partial_{\rm int} U$, one knows that
\begin{equation}\label{2.36}
\sup\limits_{z \in \partial U \cup \partial_{\rm int} U} \psi(z) \le c \,\inf\limits_{z \in \partial U \cup \partial_{\rm int} U} \psi(z)\,.
\end{equation}

\medskip\n
Inserting this inequality in (\ref{2.35}) yields for $y \in V$
\begin{equation*}
e_V(y) \ge c\,L_{n+1}^{d-2} \;\sup\limits_{z \in \partial U \cup \partial_{\rm int} U} P_z [H_V < \infty, X_{H_V} = y]\,,
\end{equation*}
and hence (\ref{2.33}).
\end{proof}

\n
For $2 \le \ell \le r$, we can view $\rho^\prime_\ell$ in (\ref{2.24}) as a point process on $\{R_\ell < \infty = R_{\ell + 1}\} ( \subseteq W_+)$ and then introduce
\begin{equation}\label{2.37}
\mbox{$\wt{\rho}\,^\prime_\ell$ the image of $\rho^\prime_\ell$ under $\phi_\ell$}\,.
\end{equation}
Thus with (\ref{2.26}) we find that
\begin{equation}\label{2.38}
\mbox{$\wt{\rho}\,^\prime_\ell$, $2 \le \ell \le r$, $\ov{\rho}$, $\mu^\prime_{1,1}$, $\mu^\prime_{2,2}$ are independent Poisson point processes}\,.
\end{equation}

\n
We denote with $\wt{\xi}\,^\prime_\ell$ the intensity of $\wt{\rho}\,^\prime_\ell$, for $2 \le \ell \le r$. In view of (\ref{2.27}) we see that for $w_1,\dots,w_\ell$ in $\cT$:
\begin{equation}\label{2.39}
\begin{array}{l}
\wt{\xi}\,^\prime_\ell(w_1,\dots,w_\ell) = 
\\[1ex]
u^\prime \,P_{e_V} \big[E, R_\ell < \infty = R_{\ell + 1}, (X_{R_k + \cdot})_{0 \le \cdot \le D_k - R_k} = w_k (\cdot), \,1 \le k \le \ell \big]\,.
\end{array}
\end{equation}

\n
We will now prove with the help of Lemma \ref{lem2.2} that for $2 \le \ell \le r$,
\begin{equation}\label{2.40}
\wt{\xi}\,^\prime_\ell \le c_4 \,u^\prime \,L^{d-2}_n \;\Big(\dis\frac{c_5}{\ell^{d-2}_n}\Big)^{\ell - 1} \,P_{\ov{e}_V} [(X_\point)_{0 \le \cdot \le T_U} \in \cdot]^{\otimes \ell}\,,
\end{equation}

\medskip\n
where $\ov{e}_V$ stands for the normalized equilibrium measure of $V$:
\begin{equation}\label{2.41}
\ov{e}_V = \dis\frac{1}{{\rm cap}(V)} \;e_V\,.
\end{equation}

\medskip\n
Indeed observe that for $w_1,\dots,w_\ell$ in $\cT$, in view of (\ref{2.39}), discarding the event $E$ and using the strong Markov property we find that
\begin{equation}\label{2.42}
\begin{array}{l}
\wt{\xi}\,^\prime_\ell (w_1,\dots,w_\ell) \le u^\prime \,E_{e_V}[R_\ell < \infty, (X_{R_k + \cdot})_{0 \le \cdot \le D_k - R_k} = w_k(\cdot), \;1 \le k < \ell\,,
\\[1ex]
P_{X_{R_\ell}} \big[(X_\point)_{0 \le \cdot \le T_U} = w_\ell(\cdot)]\big] = 
\\[1ex]
u^\prime\,E_{e_V} \big[D_{\ell-1}< \infty, (X_{R_k + \cdot})_{0 \le \cdot \le D_k - R_k} = w_k(\cdot), \;1 \le k < \ell\,,
\\
E_{X_{D_{\ell-1}}}\big[H_V < \infty, P_{X_{H_V}} [(X_\point)_{0 \le \cdot \le T_U} = w_\ell (\cdot)]\big]\big] \stackrel{(\ref{2.33})}{\le}
\\[1ex]
u^\prime\,P_{e_V} \big[R_{\ell - 1} < \infty, (X_{R_{k+ \cdot}})_{0 \le \cdot \le D_k - R_k} = w_k(\cdot), 1 \le k < \ell\big]
\\[1ex]
\dis\frac{c}{L_{n+1}^{d-2}} \,P_{e_V} \big[(X_\point)_{0 \le \cdot \le T_U} = w_\ell(\cdot)\big] \stackrel{\rm induction}{\le}
\\[1ex]
u^\prime \;\Big(\dis\frac{c}{L_{n+1}^{d-2}}\Big)^{\ell - 1} \;\mbox{\small $\prod\limits^\ell_{k=1}$} \;P_{e_V} \big[(X_\point)_{0 \le \cdot \le T_U} = w_k(\cdot)\big]\,.
\end{array}
\end{equation}

\medskip\n
The claim (\ref{2.40}) now follows by bounding cap$(V)$ with (\ref{1.8}) and the subadditive property below (\ref{1.5}), and using (\ref{2.41}).

\medskip
We can also view $1_{\{R_2 = \infty\}} (\mu^*_{1,1} + \mu^*_{2,2})$ as a Poisson point process on $\{R_1 < \infty = R_2\}$ $(\subseteq W_+)$, (note that $R_1 = 0$, $\mu^*_{1,1} + \mu^*_{2,2}$-a.s. in view of the analogue of (\ref{2.16}) for $\mu^*_{i,j}$). We then introduce the Poisson point process on $\cT$:
\begin{equation}\label{2.43}
\mbox{$\wt{\rho}\,^*_1$ the image of $1_{\{R_2 = \infty\}} (\mu^*_{1,1} + \mu^*_{2,2})$ under $\phi_1$}\,,
\end{equation}

\medskip\n
and denote with $\wt{\xi}\,^*_1$ its intensity measure (a measure on $\cT$). In view of (\ref{1.15}), (\ref{2.18}) and the analogue of (\ref{2.16}) for $\mu^*_{i,j}$, we find that:
\begin{equation}\label{2.44}
\wt{\xi}\,^*_1 (w) = (u-u^\prime) \,P_{e_V} [(X_\point)_{0 \le \cdot \le T_U} = w(\cdot), \;H_V \circ \theta_{T_U} = \infty], \;\mbox{for} \; w \in \cT\,.
\end{equation}

\n
As a result of (\ref{2.29}) we see that for $L_0 \ge c$,
\begin{equation}\label{2.45}
\inf\limits_{x \in \partial U} \,P_x[H_V = \infty] \ge \fr \;,
\end{equation}

\n
and hence with (\ref{1.8}) and the strong Markov property applied at time $T_U$ to the probability in (\ref{2.44}), we deduce that
\begin{equation}\label{2.46}
\wt{\xi}\,^*_1 \ge c_6(u-u^\prime) \,L_n^{d-2} \,P_{\ov{e}_V} [(X_\point)_{0 \le \cdot \le T_U} \in \cdot]\,.
\end{equation}

\medskip\n
The upper bounds (\ref{2.40}) and the lower bound (\ref{2.46}) will be our main instrument when seeking to dominate the trace on $V \cap \IZ^2$ of trajectories in the support of $\mu^\prime_{1,2} + \mu^\prime_{2,1}$ in terms of the trace on $\IZ^2 \cap V$ of trajectories in the support of $\mu^*_{1,1} + \mu^*_{2,2}$. With this goal in mind we introduce the random subsets of $V \cap \IZ^2$:
\begin{equation}\label{2.47}
\begin{split}
\cI^\prime_{i,i} &= V \cap \IZ^2 \cap \Big(\mbox{\f $\dis\bigcup\limits_{w \in {\rm Supp}(\mu^\prime_{i,i})}$} \;{\rm range}(w)\Big), \;\mbox{for $i = 1,2$}\,,
\\[1ex]
\wt{\cI}_{\ell}^{\prime} & = V \cap \IZ^2 \cap \Big(\mbox{\f $\dis\bigcup\limits_{(w_1,\dots,w_\ell) \in {\rm Supp} \,\wt{\rho}\,^\prime_\ell}$} \;{\rm range}(w_1) \cup \dots \cup {\rm range}(w_\ell)\Big), \;\mbox{for $2 \le \ell \le r$}\,,
\\[1ex]
\ov{\cI} & = V \cap \IZ^2 \cap \Big(\mbox{\f $\dis\bigcup\limits_{w \in {\rm Supp}\,\ov{\rho}}$}\,{\rm range}(w)\Big)\,.
\end{split}
\end{equation}

\medskip\n
Note that with (\ref{2.38}) it follows that
\begin{equation}\label{2.48}
\mbox{$\cI^\prime_{1,1}$, $\cI^\prime_{2,2}$, $\wt{\cI}\,^\prime_\ell$, $2 \le \ell \le r$, and $\ov{\cI}$ are independent under $\IP$}\,.
\end{equation}

\n
Moreover in view of (\ref{2.17}), (\ref{2.23}), (\ref{2.37}) one has the identity
\begin{equation}\label{2.49}
\cI^{u^\prime} \cap V \cap \IZ^2 = \cI^\prime_{1,1} \cup \cI^\prime_{2,2} \cup \Big(\mbox{\f $\dis\bigcup\limits_{2 \le \ell \le r}$} \,\wt{\cI}\,^\prime_\ell\Big) \cup \ov{\cI}\,.
\end{equation}
In a similar fashion to (\ref{2.47}) we can define 
\begin{equation}\label{2.50}
\begin{split}
\cI^*_{i,i} & = V \cap \IZ^2 \cap \Big(\mbox{\f $\dis\bigcup\limits_{w \in {\rm Supp} \,\mu^*_{i,i}}$} \,{\rm range}(w)\Big), \;\mbox{for $i = 1,2$}\,,
\\[1ex]
\cI^* & = V \cap \IZ^2 \cap \Big(\mbox{\f $\dis\bigcup\limits_{w \in {\rm Supp} \,\wt{\rho}^*_1}$} \;{\rm range}(w)\Big)\,.
\end{split}
\end{equation}

\n
Taking (\ref{2.20}) and (\ref{2.43}) into account we see that
\begin{equation}\label{2.51}
\mbox{$\cI^*$, $\cI^\prime_{1,1}$, $\cI^\prime_{2,2}$, $\wt{\cI}\,^\prime_\ell$, $2 \le \ell \le r$, $\ov{\cI}$ are independent under $\IP$}, 
\end{equation}
and further that
\begin{equation}\label{2.52}
\cI^* \subseteq \cI^*_{1,1} \cup \cI^*_{2,2}\,.
\end{equation}

\medskip\n
As we now explain we will construct a coupling of $\wt{\cI}^\prime_\ell$, $2 \le \ell \le r$, and $\cI^*$. We consider on some auxiliary probability space independent Poisson variables, $N^\prime_\ell$, $2 \le \ell \le r$, and $N^*_\ell$, $1 \le \ell \le r$, with respective intensities, cf.~(\ref{2.40}), (\ref{2.46})
\begin{equation}\label{2.53}
\begin{split}
\lambda^\prime_\ell & = c_4 \,u^\prime \,L_n^{d-2} \,\Big(\dis\frac{c_5}{\ell_n^{d-2}}\Big)^{\ell - 1}, \;2 \le \ell \le r, \;\mbox{and}
\\[1ex]
\lambda^*_\ell & = \dis\frac{c_6}{r} \;(u - u^\prime) \,L^{d-2}_n\,, \;\;1 \le \ell \le r\,,
\end{split}
\end{equation}

\n
as well as iid $\cT$-valued variables $\gamma^\ell_i$, $1 \le \ell \le r$, $i \ge 1$, independent from the $N^\prime_\ell$, $2 \le \ell \le r$, $N^*_\ell$, $1 \le \ell \le r$, with common distribution $P_{\ov{e}_V} [(X_\point)_{0 \le \cdot \le T_U} \in \cdot]$. We then define the point processes:
\begin{equation}\label{2.54}
\Gamma^\prime_\ell = \dsl_{1 \le i \le N^\prime_\ell} \;\delta_{(\gamma^\ell_{1 + (i-1)\ell}, \gamma^\ell_{2 + (i-1)\ell}, \dots, \gamma^\ell_{\ell + (i-1) \ell})}, \;\mbox{on $\cT^\ell$, with $2 \le \ell \le r$}\,,
\end{equation}
and
\begin{equation}\label{2.55}
\Gamma^*_\ell = \dsl_{1 \le i \le N_\ell^*} \delta_{\gamma^\ell_i}, \;\mbox{on $\cT$, with $1 \le \ell \le r$}\,.
\end{equation}
We thus find that
\begin{align}
&\mbox{$\Gamma^\prime_\ell$, $2 \le \ell \le r$, are independent Poisson point processes with respective} \label{2.56}
\\[-0.8ex]
&\mbox{intensity measures $\lambda^\prime_\ell \,P_{\ov{e}_V}[(X_\point)_{0 \le \cdot \le T_U} \in \cdot]^{\otimes \ell} \Big(\stackrel{(\ref{2.40})}{\ge} \wt{\xi}\,^\prime_\ell\Big)$, and} \nonumber
\\[2ex]
&\mbox{$\Gamma^*_\ell$, $1 \le \ell \le r$, are independent Poisson point processes with identical} \label{2.57}
\\[-1ex]
&\mbox{intensity measure $\lambda^*_\ell \,P_{\ov{e}_V}[(X_\point)_{0 \le \cdot \le T_U} \in \cdot] 
\Big( \stackrel{(\ref{2.46})}{\le} \mbox{\f $\dis\frac{1}{r}$}\;\wt{\xi}\,^*_1$}\Big) \,. \nonumber
\end{align}

\medskip\n
In view of (\ref{2.56}), (\ref{2.57}) we can thus construct on some probability space $(\Sigma, \cF, Q)$ a coupling of $\wt{\rho}\,^\prime_\ell$, $\wt{N}\,^\prime_\ell$, $\Gamma^\prime_\ell$, $2 \le \ell \le r$, $\wt{\rho}\,^*_1$, $N^*_\ell$, $\Gamma^*_\ell$, $1 \le \ell \le r$, so that
\begin{equation}\label{2.58}
\mbox{$\wt{\rho}\,^\prime_\ell \le \Gamma^\prime_\ell$, for $2 \le \ell \le r$, and $\dsl_{1 \le \ell \le r} \Gamma^*_\ell \le \wt{\rho}\,^*_1$}\,,
\end{equation}

\medskip\n
(for instance in view of the inequality in the last line of (\ref{2.56}) we construct the law of $\wt{\rho}\,^\prime_\ell$ by thinning $\Gamma^\prime_\ell$, and in view of the inequality in the last line of (\ref{2.57}) we construct the laws of $\wt{\rho}\,^*_1$ by adding an independent $\cT$-valued Poisson point process).

\medskip
From the definition of $\wt{\cI}\,^\prime_\ell$ in (\ref{2.47}) and $\cI^*$ in (\ref{2.50}) it then follows that on the event $\bigcap_{2 \le \ell \le r} \{N_\ell^* \ge r\,N^\prime_\ell\}$ one has
\begin{equation}\label{2.59}
\begin{array}{l}
\bigcup\limits_{2 \le \ell \le r} \wt{\cI}^\prime_\ell \stackrel{(\ref{2.54}),(\ref{2.58})}{\subseteq} 
\\
V \cap \IZ^2 \cap \Big(\mbox{\f $\dis\bigcup\limits_{2 \le \ell \le r}$} \Big(\mbox{\f $\dis\bigcup\limits_{1\le i \le N^\prime_\ell}$} {\rm range}(\gamma^\ell_{1 + (i-1)\ell}) \cup \dots \cup {\rm range}(\gamma^\ell_{\ell + (i-1)\ell})\Big)\Big) \subseteq
\\[1ex]
V \cap \IZ^2 \cap  \Big(\mbox{\f $\dis\bigcup\limits_{2 \le \ell \le r}$} \Big(\mbox{\f $\dis\bigcup\limits_{1\le j \le N^*_\ell}$} {\rm range}(\gamma^\ell_j)\Big)\Big) \stackrel{(\ref{2.55}),(\ref{2.58})}{\subseteq} 
\\[1.5ex]
V \cap \IZ^2  \cap \Big(\mbox{\f $\dis\bigcup\limits_{w \in {\rm Supp}\,\wt{\rho}\,^*_1}$} {\rm range}(w)\Big) = \cI^*.
\end{array}
\end{equation}

\n
Hence the coupling we constructed on $(\Sigma, \cF, Q)$ leads to the bound:
\begin{equation}\label{2.60}
Q\Big(\cI^* \supseteq \mbox{\f $\dis\bigcup\limits_{2 \le \ell \le r}$}  \wt{\cI}\,^\prime_\ell\Big) \ge 1 - \dsl_{2 \le \ell \le r} \,Q(N^*_\ell < r \,N^\prime_\ell)\,.
\end{equation}

\n
This inequality plays a pivotal role in the sprinkling technique we employ in order to control interactions. Indeed we can bound the probability in the right-hand side of (\ref{2.13}) as follows:
\begin{equation}\label{2.61}
\begin{array}{l}
\IP[B^{u^\prime}_{m_1} \cap B^{u^\prime}_{m_2}] \stackrel{(\ref{2.49})}{=} 
\\[1ex]
\IP\Big[\mbox{there are $*$-paths from $C_{m_1}$ to $\partial_{\rm int} \wt{C}_{m_1}$ and from $C_{m_2}$ to $\partial_{\rm int} \wt{C}_{m_2}$ in}
\\
\;\;\;\; \cI^\prime_{1,1} \cup \cI^\prime_{2,2} \cup \Big(\mbox{\f $\dis\bigcup\limits_{2 \le \ell \le r}$} \wt{\cI}\,^\prime_\ell\Big) \cup \ov{\cI}\Big] \stackrel{(\ref{2.51}), (\ref{2.60})}{\le}
\\[2ex]
\IP\big[\mbox{there are $*$-paths from $C_{m_1}$ to $\partial_{\rm int} \wt{C}_{m_1}$ and from $C_{m_2}$ to $\partial_{\rm int} \wt{C}_{m_2}$ in}
\\
\;\;\;\; \cI^\prime_{1,1} \cup \cI^\prime_{2,2} \cup \cI^* \cup \ov{\cI}\big] +  \dsl_{2 \le \ell \le r} Q(N^*_\ell < r\,N^\prime_\ell) \stackrel{(\ref{2.20}), (\ref{2.52})}{\le}
\\[2ex]
\IP\big[\mbox{there is a $*$-path from $C_{m_1}$ to $\partial_{\rm int} \wt{C}_{m_1}$ in $\cI^\prime_{1,1} \cup \cI^*_{1,1}\big]$}
\\[1ex]
\IP\big[\mbox{there is a $*$-path from $C_{m_2}$ to $\partial_{\rm int} \wt{C}_{m_2}$ in $\cI^\prime_{2,2} \cup \cI^*_{2,2} \big]$} \;+ 
\\[1ex]
\IP[\ov{\cI} \not= \phi] + \dsl_{2 \le \ell \le r} Q(N^*_\ell < r \,N^\prime_\ell) \stackrel{(\ref{2.8}), (\ref{2.19})}{\le} q_n(u)^2 + \ov{\xi}(W_+) + \dsl_{2 \le \ell \le r} Q(N^*_\ell < r \,N^\prime_\ell) ,
\end{array}
\end{equation}

\n
where we have used the fact that $(\cI^\prime_{i,i} \cup \cI^*_{i,i}) \cap \wt{C}_{m_j} = \phi$ for $1 \le i \not= j \le 2$, to decouple probabilities after the second inequality and bounded $\IP[\ov{\cI} \not= \phi]$ by $\IP[\ov{\rho} \not= 0] \le \ov{\xi}(W_+)$, in the last inequality.

\medskip
We will now bound the last term in the last line of (\ref{2.61}). To this effect we ensure that, see (\ref{2.53})
\begin{equation}\label{2.62}
\lambda^*_\ell \ge 4r \,\lambda^\prime_\ell, \;\mbox{for $2 \le \ell \le  r$} \,,
\end{equation}
by imposing $L_0 \ge c$ and
\begin{equation}\label{2.63}
u - u^\prime \ge c^\prime_0 \;\dis\frac{r^2}{\ell_n^{d-2}} \;u^\prime, \;\;\mbox{(with the choice (\ref{2.66}) below this will yield (\ref{2.11}))}.
\end{equation}

\medskip\n
As  result of (\ref{2.62}) we thus find that for $2 \le \ell \le r$
\begin{equation}\label{2.64}
\begin{split}
Q(N^*_\ell < r N^\prime_\ell) & \le Q\Big(N^*_\ell \le \mbox{\f $\dis\frac{\lambda^*_\ell}{2}$}\Big) + Q\Big(N^\prime_\ell \ge \mbox{\f $\dis\frac{\lambda^*_\ell}{2 r}$} \Big)
\\[1ex]
& \le c\;e^{-\frac{c}{r}\;\lambda_\ell^*} \stackrel{(\ref{2.53})}{=} c\;e^{-\frac{c}{r^2} (u - u^\prime) L_n^{d-2}}\,,
\end{split}
\end{equation}

\n
where we have dominated $N^\prime_\ell$ by a Poisson variable of intensity $\frac{\lambda^*_\ell}{4r}$ thanks to (\ref{2.62}), and used classical exponential bounds on the tail of Poisson variables in the second inequality.

\medskip
Coming back to (\ref{2.13}) we see that when $L_0 \ge c$, for $n \ge 0$, $r \ge 2$, when (\ref{2.63}) holds one finds collecting (\ref{2.30}), (\ref{2.61}), (\ref{2.64}) that
\begin{equation}\label{2.65}
q_{n + 1}\big(u^\prime) \le c \,\ell_n^2 \big(q_n (u)^2 + u^\prime \,c^r_3 \; L_n^{(d-2)(1-ar)} + r \,e^{-\frac{c}{r^2} (u - u^\prime)L^{d-2}_n}\big)\,.
\end{equation}
We can now choose
\begin{equation}\label{2.66}
r = \mbox{\f $\dis\frac{3}{a}$} \stackrel{(\ref{2.1})}{=} 300\,,
\end{equation}

\n
and with this choice (\ref{2.65}) is more than enough to yield the claim (\ref{2.12}).
\end{proof}

\begin{remark}\label{rem2.3} \rm  ~

\medskip\n
1) It is clear from the above proof that adjusting the choice of $r$ in (\ref{2.66}) we can produce an arbitrary negative power of $L_n$ in place of $L_n^{-2}$ in the right-hand side of (\ref{2.12}), (of course adjusting constants there as well). The present choice will suffice for our purpose. 

\medskip\noindent
2) The inequality (\ref{2.65}) and the auxiliary condition (\ref{2.63}) play a similar role to (3.52) and (3.45) of \cite{Szni07a}, (which pertain to the control of crossings of the vacant set and are later applied to increasing sequences $u_n$ which remain bounded).

\medskip
Let us give some comments. Roughly speaking, in \cite{Szni07a} one dominates the trace on the single box $\wt{C}_{m_2}$ of trajectories in $\mu^\prime_{1,2} + \mu^\prime_{2,1}$ in terms of traces on $\wt{C}_{m_2}$ of $\mu^*_{2,2}$. In the present work we must handle both boxes at once when bounding the probability in the right-hand side of (\ref{2.13}). We dominate the simultaneous traces on $\wt{C}_{m_1} \cap \IZ^2$ and $\wt{C}_{m_2} \cap \IZ^2$ of $\mu^\prime_{1,2} + \mu^\prime_{2,1}$ in terms of the independent traces of $\mu^*_{1,1}$ on $\wt{C}_{m_1} \cap \IZ^2$ and of  $\mu^*_{2,2}$ on $\wt{C}_{m_2} \cap \IZ^2$.

\medskip
Whereas in \cite{Szni07a} one specifies the parameter $r$ relatively late, depending on the successive choices of $L_0$ and $u_0$, in order to cope with the increasing sequences $u_n$ and the indeterminacy of $u_0$, in the present context $r$ is simply fixed by the choice (\ref{2.66}). However the last term of (\ref{2.65}) is specific to the present control of a Poisson point process affecting two boxes in terms of two independent Poisson point processes concerning each respective box. Quite naturally this term deteriorates when the intensity $\lambda^*_\ell$ in (\ref{2.53}) becomes small. 

\hfill $\square$
\end{remark}

We will now see how one can propagate controls on the probabilities $q_n(u_n)$, cf.~(\ref{2.8}), and choose sequences $u_n$ which decrease not too fast so that $u_\infty = \lim u_n > 0$, but still sufficiently fast so that $u_n - u_{n+1}$ is large enough. The last term in the right-hand side of (\ref{2.12}) suggests picking $u_{n+1}$ sufficiently smaller than $u_n$ in (\ref{2.67}) below, and not trying to saturate (\ref{2.11}), (with $u_{n+1}$, and $u_n$ respectively playing the roles of $u^\prime$ and $u$).

\medskip
Given $u_0$ in $(0,1]$ and $L_0 > 1$, we define the sequence $u_n, n \ge 0$, via
\begin{equation}\label{2.67}
u_{n+1} = \Big(1 + \dis\frac{1}{\log L_n}\Big)^{-1} \,u_n, \;\mbox{for $n \ge 0$}\,.
\end{equation}

\n
We now derive a crucial propagation of certain controls from one scale to the next. We refer to Proposition \ref{prop2.1} for notation.

\begin{proposition}\label{prop2.4} $(d \ge 3)$

\medskip
When $L_0 \ge c$ and $0 < u_0 \le 1$, if for some $n \ge 0$,
\begin{equation}\label{2.68}
\begin{array}{ll}
\hspace{-6cm} {\rm i)} & c_2 (u_n - u_{n+1}) \,L_n^{d-2} \ge 2 \log L_n\,,
\\[1ex]
\hspace{-6cm} {\rm ii)} & a_n \stackrel{\rm def}{=} c_1 \,\ell_n^2 \,q_n(u_n) \le L_n^{-1}\,,
\end{array}
\end{equation}

\medskip\n
then {\rm (\ref{2.68}) i)} and {\rm ii)} hold true with $n+1$ in place of $n$.
\end{proposition}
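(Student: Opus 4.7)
The proof has two parts, corresponding to the two conditions in (\ref{2.68}). Propagating i) is elementary; propagating ii) is the crux and is precisely what Proposition \ref{prop2.1} was designed for. For i), first note that (\ref{2.67}) gives $u_n - u_{n+1} = u_n/(\log L_n + 1)$, so i) at level $n$ is equivalent to $u_n L_n^{d-2} \ge 2 c_2^{-1}(\log L_n)(\log L_n + 1)$. Forming $u_{n+1} L_{n+1}^{d-2}/(\log L_{n+1} + 1)$, using $u_{n+1}/u_n = \log L_n/(\log L_n + 1)$ together with the growth estimate $L_{n+1} \ge L_n^{1+a}$ from (\ref{2.2}), one gains a factor of order $L_n^{(d-2)a} \ge L_n^{1/100}$ over what is required; since also $\log L_{n+1} \le c \log L_n$, i) at level $n+1$ holds with a huge margin as soon as $L_0 \ge c$.

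For ii), the first step is to verify that the pair $(u_n, u_{n+1})$ meets the hypothesis (\ref{2.11}) of Proposition \ref{prop2.1}; unpacking (\ref{2.67}) this reduces to $\ell_n^{d-2}/\log L_n \ge c_0$, which holds for $L_n \ge c$ because $\ell_n \ge L_n^a$ and $d \ge 3$ give $\ell_n^{d-2} \ge L_n^{1/100}$. Applying (\ref{2.12}), using i) at level $n$ to bound $e^{-c_2(u_n - u_{n+1})L_n^{d-2}} \le L_n^{-2}$, and using $u_{n+1} \le u_0 \le 1$, multiplication by $c_1 \ell_{n+1}^2$ yields
\[
a_{n+1} \le \Bigl(\frac{\ell_{n+1}}{\ell_n}\Bigr)^{\!2} a_n^2 \;+\; 2\,c_1^2\, \ell_n^2\, \ell_{n+1}^2\, L_n^{-2}.
\]
Using ii) at level $n$ (so $a_n^2 \le L_n^{-2}$) together with the bounds $\ell_n \le c L_n^a$ and $L_{n+1} \le c L_n^{1+a}$, the first summand multiplied by $L_{n+1}$ is of order $L_n^{2a^2 + a - 1}$, and the second multiplied by $L_{n+1}$ is of order $L_n^{2a^2 + 5a - 1}$. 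With $a = 1/100$ from (\ref{2.1}), both exponents are less than $-9/10$, so for $L_0 \ge c$ each contribution is at most $L_{n+1}^{-1}/2$, giving $a_{n+1} \le L_{n+1}^{-1}$, which is ii) at level $n+1$.

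The main delicate point is ensuring that condition (\ref{2.11}) can be invoked with the specific decrement rate (\ref{2.67}); this is precisely what motivated that choice, since $1/\log L_n$ must decay slowly enough compared to $c_0/\ell_n^{d-2}$ for the recursion to be self-sustaining. Beyond that, the argument is bookkeeping on powers of $L_n$, made very comfortable by the small value $a = 1/100$: all relevant exponents have definite negative sign, so it suffices to choose $L_0$ large enough to absorb the unspecified constants once at the outset.
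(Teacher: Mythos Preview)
Your proof is correct and follows essentially the same approach as the paper: verify that the decrement (\ref{2.67}) satisfies (\ref{2.11}), apply Proposition~\ref{prop2.1} to get the recursion $a_{n+1}\le(\ell_{n+1}/\ell_n)^2 a_n^2 + c(\ell_{n+1}\ell_n)^2 L_n^{-2}$ (after using i) to kill the exponential), and then do exponent bookkeeping in powers of $L_n$ with $a=\frac{1}{100}$. Your treatment of i) via the ratio $u_{n+1}L_{n+1}^{d-2}/u_n L_n^{d-2}$ is a cosmetic variant of the paper's direct chain of inequalities in (\ref{2.70}); the substance is identical.
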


\begin{proof}
Observe that when $L_0 \ge c$,
\begin{equation*}
u_n \stackrel{(\ref{2.67})}{=} \Big(1 + \dis\frac{1}{\log L_n}\Big) \;u_{n+1} \ge \Big(1 + \dis\frac{c_0}{\ell^{d-2}_n}\Big) \,u_{n + 1}\,,
\end{equation*}

\n
and hence with (\ref{2.12}) where we set $u = u_n$ and $u^\prime = u_{n+1}$, noting that $u_{n+1} \le u_0 \le 1$, we find that:
\begin{equation}\label{2.69}
a_{n+1} \le \Big(\dis\frac{\ell_{n+1}}{\ell_n}\Big)^2 \,a^2_n + c(\ell_{n+1} \ell_n)^2 \,(L^{-2}_n + e^{-c_2(u_n - u_{n+1})L^{d-2}_n})\,.
\end{equation}

\n
We will now check (\ref{2.68}) i) at level $n+1$. For $L_0 \ge c$, we find that $1 \le \log L_k \le \log L_{k+1} \stackrel{(\ref{2.2})}{\le} (1 + a) \log L_k + c \le 2 \log L_k$, for all $k \ge 0$, and therefore when $L_0 \ge c^\prime$,
\begin{equation}\label{2.70}
\begin{array}{l}
c_2 (u_{n+1} - u_{n+2}) \,L^{d-2}_{n+1} \;\stackrel{(\ref{2.2}),(\ref{2.67})}{=} c_2 \;\dis\frac{u_{n+2}}{\log L_{n+1}} \;L_n^{d-2} \;\ell_n^{d-2} \ge
\\[1ex]
\dis\frac{c_2}{4} \;\dis\frac{u_{n+1}}{\log L_n} \;L_n^{d-2} \;\ell_n^{d-2} = c_2 (u_n - u_{n + 1}) \;L^{d-2}_n \; \dis\frac{\ell_n^{d-2}}{4} \stackrel{(\ref{2.68}) i)}{\ge} \fr  \;\log L_n \; \ell_n^{d-2} \ge
\\[2ex]
\frvier \;\log L_{n+1} \,\ell_n^{d-2} \ge 2 \log L_{n+1}\,.
\end{array}
\end{equation}

\medskip\n
This shows that (\ref{2.68}) i) holds at level $n+1$. We now check (\ref{2.68}) ii) at level $n+1$. We note that 
\begin{equation*}
\dis\frac{\ell_{n+1}}{\ell_n} \stackrel{(\ref{2.2})}{\le} c \;\dis\frac{L_n^{(a+1)a}}{L_n^a} = c\,L^{a^2}_n, \;\mbox{and} \;\ell_{n+1} \,\ell_n \le c\,L_n^{2a + a^2}\,.
\end{equation*}

\n
We thus see that when $L_0 \ge c$, (\ref{2.69}) and the induction hypothesis yield that
\begin{equation*}
\begin{array}{l}
a_{n+1} \le c\,L_n^{2a^2} \;a^2_n + c\,L_n^{4a + 2a^2} (L_n^{-2} + L^{-2}_n) \le c\,L_n^{4a + 2a^2 - 2} \stackrel{(\ref{2.1})}{\le} 
\\[1ex]
L^{-1}_{n+1} \;\dis\frac{L_{n+1}}{L_n} \;c\,L_n^{6a - 1} \;\stackrel{(\ref{2.2})}{\le} L^{-1}_{n+1} \;c\,L_n^{7a - 1} \le L^{-1}_{n+1}\,.
\end{array}
\end{equation*}

\n
This shows (\ref{2.68}) ii) at level $n+1$ and completes the proof of Proposition \ref{prop2.4}.
\end{proof}

With Proposition \ref{prop2.4} obtaining a control on the sequence of probabilities $q_n(u_n)$ is reduced to initiating the recurrence in (\ref{2.68}). We collect the results we will need for the next section in the following theorem, (see Proposition \ref{prop2.1} and (\ref{2.67}) for the notation).

\begin{theorem}\label{theo2.5}  $(d \ge 3)$

\medskip\n
When  $L_0 \ge c_7$, if for the choice
\begin{equation}\label{2.71}
u_0 = \dis\frac{4}{c_2} \;(\log L_0)^2 \,L_0^{-(d-2)}\,,
\end{equation}
it holds that
\begin{equation} \label{2.72}
c_1 \,\ell^2_0 \,q_0(u_0) \le L_0^{-1}\,,
\end{equation}
then
\begin{equation}\label{2.73}
c_1 \,\ell^2_n \,q_n (u_\infty) \le c_1 \,\ell^2_n \,q_n (u_n) \le L^{-1}_n, \;\mbox{for all $n \ge 0$}\,, 
\end{equation}

\medskip\n
where $u_\infty  \stackrel{\rm def}{=} u_0 \times \prod_{n \ge 0} \big(1 + \frac{1}{\log L_n}\big)^{-1} \in (0,1]$.
\end{theorem}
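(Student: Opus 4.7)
The plan is simply to iterate Proposition \ref{prop2.4}, using (\ref{2.71})--(\ref{2.72}) as the base case of the induction. All the heavy lifting---the sprinkling/decoupling producing the recursion (\ref{2.12}), and the verification that the two conditions (\ref{2.68}) propagate from one scale to the next---has already been packaged into Propositions \ref{prop2.1} and \ref{prop2.4}. Theorem \ref{theo2.5} itself amounts to checking the base case, the monotonicity linking $q_n(u_\infty)$ to $q_n(u_n)$, and the convergence of the infinite product defining $u_\infty$.

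For the base case at $n=0$, condition (\ref{2.68}) ii) is precisely the hypothesis (\ref{2.72}). For (\ref{2.68}) i), I would use (\ref{2.67}) to compute
\[
u_0 - u_1 \;=\; u_0\Bigl(1 - \frac{\log L_0}{\log L_0 + 1}\Bigr) \;=\; \frac{u_0}{\log L_0 + 1},
\]
so that inserting the choice (\ref{2.71}) gives
\[
c_2(u_0 - u_1)L_0^{d-2} \;=\; \frac{4(\log L_0)^2}{\log L_0 + 1} \;\ge\; 2\log L_0
\]
whenever $\log L_0 \ge 1$. For $L_0 \ge c_7$ large enough we also have $u_0 \le 1$, so Proposition \ref{prop2.4} applies. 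An induction on $n$ then delivers (\ref{2.68}) i), ii) at all levels $n \ge 0$; the right-hand half of (\ref{2.73}) is just (\ref{2.68}) ii) at level $n$.

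The two remaining points are routine. The left-hand inequality in (\ref{2.73}) follows because $u \mapsto q_n(u)$ is non-decreasing (since $\cI^u$ grows with $u$, the event $B_m^u$ is increasing in $u$) and $u_\infty \le u_n$ by construction. To see $u_\infty \in (0,1]$, I would use (\ref{2.2}) to obtain $L_{n+1} \ge L_n^{1+a}$, whence $\log L_n \ge (1+a)^n \log L_0$; then the inequality $\log(1 + 1/\log L_n) \le 1/\log L_n$ bounds the tail series by a convergent geometric one, so the infinite product converges to a strictly positive value, and $u_\infty \le u_0 \le 1$ trivially.

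The main obstacle here is really not in Theorem \ref{theo2.5} but upstream, namely the delicate calibration of the dilution scheme (\ref{2.67}): $u_n$ must decrease slowly enough that the product defining $u_\infty$ remains positive, yet fast enough that the sprinkling gap $u_n - u_{n+1}$ dominates the long-range-interaction correction $\exp\{-c_2(u_n-u_{n+1})L_n^{d-2}\}$ in (\ref{2.12}). The choice $(1+1/\log L_n)^{-1}$ is precisely what makes both requirements hold at once, and once this is in place Theorem \ref{theo2.5} is essentially a bookkeeping exercise verifying the base-case constants and the convergence of the product.
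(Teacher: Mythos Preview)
Your proof is correct and follows essentially the same approach as the paper: verify the base case (\ref{2.68}) i)--ii) at $n=0$ using (\ref{2.71})--(\ref{2.72}), then invoke Proposition \ref{prop2.4} inductively; the paper's computation of $c_2(u_0-u_1)L_0^{d-2}$ is written slightly differently (via $u_0-u_1=u_1/\log L_0$ and $u_1\ge u_0/2$) but reaches the identical bound $\ge 2\log L_0$, and the positivity of $u_\infty$ is argued in the same way from $L_n\ge L_0^{(1+a)^n}$. Your version is in fact a bit more explicit about the monotonicity step yielding the left-hand inequality of (\ref{2.73}).
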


\begin{proof}
We see that for $L_0 \ge c_7$ with the choice in (\ref{2.71}), $0 < u_0 \le 1$, and also that:
\begin{equation*}
c_2 (u_0 - u_1) \,L_0^{d-2} = c_2 \;\dis\frac{u_1}{\log L_0} \;L_0^{d-2} \ge \dis\frac{c_2}{2} \;\dis\frac{u_0}{\log L_0} \;L_0^{d-2} = 2 \log L_0\,.
\end{equation*}

\medskip\n
The claim (\ref{2.73}) now follows from Proposition \ref{prop2.4} and the positivity of $u_\infty$ is a consequence of the inequality $L_n \ge L_0^{(1 + a)^n}$, cf.~(\ref{2.2}).
\end{proof}

\begin{remark}\label{rem2.6} \rm ~

\medskip\n
1)
It should be realized that initiating the induction scheme we have developed in this section, i.e. checking (\ref{2.72}) for $L_0 \ge c_7$ and $u_0$ as in (\ref{2.71}) is not a mere formality. To illustrate the point observe that one can replace $\IZ^2$ with $\IZ^d$ in (\ref{2.3}) and (\ref{2.7}), i.e. consider instead boxes of size of order $L_n$ filling the whole space $\IZ^d$ and the event that such a box is connected by a $*$-path in $\cI^u$ to the complement of the neighborhood of size of order $L_n$ corresponding to (\ref{2.5}) in this modified set-up. Then small variations of the arguments used in Propositions \ref{prop2.1}, \ref{prop2.4}, with the modified choice $a = \frac{1}{100d}$ in (\ref{2.1}), will yield a similar result as in Theorem \ref{theo2.5}, with possibly different constants and the multiplicative factor $\ell^2_n$ replaced with $\ell_n^{2(d-1)}$ in (\ref{2.73}), (this modification originates from the fact that a similar change takes place in (\ref{2.13}) and (\ref{2.65})). However one cannot initiate the induction scheme one obtains in this new set-up. Indeed the corresponding probabilities $q_n(u_n)$ tend to $1$ as $n$ goes to infinity, as can be seen from the fact that $\IP$-almost surely for large $n$ the box $C_m$ at level $n$ containing the origin meets $\cI^{u_\infty} \subseteq \cI^{u_n}$, and any trajectory in the interlacement entering $C_m$ also meets $\partial_{\rm int} \,\wt{C}_m$. 

\medskip
The above observation stresses the importance of being able to check the initial hypothesis of the induction and of the presence of $\IZ^2$ in the definitions (\ref{2.3}) and (\ref{2.7}). 

\bigskip\n
2)  Building up on Remark \ref{rem2.3} 1), the arguments employed in Proposition \ref{prop2.4} and Theorem \ref{theo2.5} also show with the appropriate adjustment of constants that given $M \ge 1$, when $L_0 \ge c(M)$, if for the choice $u_0 = c^\prime(M)(\log L_0)^2 \,L_0^{-(d-2)}$, one has $c^{\prime\prime}(M) \ell_0^2 \,q_0(u_0) \le L^{-M}_0$, then for all $n \ge 0$,
\begin{equation*}
c^{\prime\prime}(M) \ell_n^{2} \,q_n(u_\infty) \le c^{\prime\prime}(M) \ell_n^{2} \,q_n(u_n) \le L_n^{-M}\,,
\end{equation*}

\medskip\n
with $u_n$ as in (\ref{2.67}) and $u_\infty = u_0 \times \prod_{n \ge 0} (1 + \frac{1}{\log L_n})$.

\hfill $\square$
\end{remark}

\section{Percolation of the vacant set for small $u$}
\setcounter{equation}{0}

In this section we derive the main result of this article and show that for all $d \ge 3$, the vacant set at level $u$ percolates in $\IZ^2$ when $u$ is small enough, see Theorem \ref{theo3.4}. The main task is to initiate the induction scheme developed in the previous section. This is carried out in Theorem \ref{theo3.1} and relies on arguments which share a common flavor with some of the steps in the derivation of lower bounds for disconnection times of discrete cylinders with large boxes, cf.~Section 2 of \cite{DembSzni06} and Section 5 of \cite{Szni08}.

\medskip
In this section we consider as in (\ref{2.71}),
\begin{equation}\label{3.1}
L_0 > c_7 \;\mbox{and} \;u_0 = \dis\frac{4}{c_2} \;(\log L_0)^2 \,L_0^{-(d-2)}\,.
\end{equation}

\n
The following result is amply sufficient to show that we can find $L_0 > c_7$, such that (\ref{2.72}) holds true. We refer to (\ref{2.8}) for the notation.
\begin{theorem}\label{theo3.1} ($d \ge 3$)
\begin{equation}\label{3.2}
\lim\limits_{L_0 \r \infty} \,L_0^\rho \,q_0(u_0) = 0, \;\mbox{for all $\rho > 0$}\,.
\end{equation}
\end{theorem}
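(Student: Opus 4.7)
The plan is to coarse-grain $\IZ^2$ at an intermediate length scale $\ell$ close to $L_0$, and exploit the sparsity of $\cI^{u_0}$ by a single-scale decoupling adapted from the sprinkling technique of Proposition \ref{prop2.1}.

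Concretely, fix a large constant $A>0$ and set $\ell := L_0/(\log L_0)^A$, so that $M := \lfloor L_0/\ell \rfloor \asymp (\log L_0)^A$. Partition $\IZ^2$ into disjoint 2D boxes of side $\ell$. Any $*$-path from $C_m$ to $\partial_{\rm int}\wt{C}_m$ in $\cI^{u_0}\cap\IZ^2$ produces a $*$-connected chain $\gamma$ of $M$ such boxes each meeting $\cI^{u_0}$. With at most $(L_0/\ell)^2 \cdot 8^M$ such chains, a union bound gives
\[
q_0(u_0) \leq (\log L_0)^{2A}\cdot 8^{(\log L_0)^A} \cdot \sup_\gamma \IP\bigl[B \cap \cI^{u_0} \neq \emptyset \text{ for every } B \in \gamma\bigr].
\]

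For a single 2D box $B$ of side $\ell$ in $\IZ^d$, subadditivity together with (\ref{1.8}) gives $\text{cap}(B) \leq c\,\ell^{(d-2)\wedge 2}$, so by (\ref{1.19}),
\[
p := \IP[B \cap \cI^{u_0} \neq \emptyset] \leq u_0\,\text{cap}(B) \leq c\,(\log L_0)^{\,2 - A((d-2)\wedge 2)},
\]
which is $\leq (\log L_0)^{-B_0}$ for any prescribed $B_0 > 0$ once $A$ is chosen large. Were the hit events along $\gamma$ independent, $\IP[\text{all hit}]\leq p^M$ would give $(\log L_0)^{-B_0(\log L_0)^A}$, which dominates the $8^{(\log L_0)^A}$ counting prefactor and yields super-polynomial decay of $q_0(u_0)$ in $L_0$. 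To close the gap, I would handle the actual (polynomially decaying) correlations of $\cI^{u_0}\cap\IZ^2$ by a decoupling in the spirit of Proposition \ref{prop2.1}: split $\gamma$ into odd- and even-indexed sub-chains, whose boxes have mutual $\ell^\infty$-distance $\geq \ell$; introduce a slightly lower level $u_0' < u_0$; and use the sprinkled trajectories of $\mu_{V,u_0',u_0}$ to dominate trajectories hitting two boxes of the same sub-chain, exactly as in the coupling (\ref{2.56})-(\ref{2.61}). The result is
\[
\IP\bigl[\text{every box of } \gamma \text{ is hit}\bigr] \leq (Cp)^M + (\text{super-polynomially small correction}),
\]
and thereby (\ref{3.2}) after absorbing the $(\log L_0)^{2A}\cdot 8^{(\log L_0)^A}$ counting prefactor.

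The main technical obstacle is this decoupling: a single random walk in $\IZ^d$ can visit several boxes of the chain, particularly in dimension $d=3$, where the projection onto the axis orthogonal to $\IZ^2$ is recurrent. This is where the cylinder-disconnection-type arguments of \cite{DembSzni06} and \cite{Szni08} cited in the introduction come in: decomposing each trajectory via successive returns and departures $(R_k,D_k)$ as in (\ref{2.22}) and iterating a (\ref{2.29})-type hitting bound at scale $\ell$ yields the geometric tail $(c/\ell^{d-2})^{k-1}$ on the box-visit count of a single trajectory, which makes the sprinkling argument produce the required $(Cp)^M$ bound.
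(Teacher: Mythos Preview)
Your plan has a genuine gap, visible most sharply when $d=3$. The ``geometric tail on the box-visit count'' you invoke at the end does not hold: estimate (\ref{2.29}) concerns $d$-dimensional boxes, whereas your boxes $B\subset\IZ^2$ are two-dimensional. For $d=3$ one has ${\rm cap}(B)\asymp\ell$, so a walk at $\ell^\infty$-distance $\ell$ from $B$ hits it with probability bounded away from~$0$, not like $c/\ell^{d-2}$. Hence a single interlacement trajectory entering the region typically visits a positive fraction of the boxes in your chain, and the joint event ``every $B\in\gamma$ is hit'' is governed by the geometry of the $O\big((\log L_0)^2\big)$ trajectories of $\mu_{B(0,cL_0),u_0}$, not by $M$ approximately independent Bernoulli$(p)$ trials; there is no route to a $(Cp)^M$ bound with $p\to 0$. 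The sprinkling of Proposition~\ref{prop2.1} cannot rescue this: its correction term is $e^{-c(u-u')\ell^{d-2}}$, and the largest increment available is $u-u'<u_0$, giving $u_0\,\ell^{d-2}\asymp(\log L_0)^{2-A(d-2)}\to 0$ precisely for the values of $A$ you need to make $p$ small. So the coupling (\ref{2.56})--(\ref{2.61}) produces no useful decoupling at this scale.

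The paper's proof avoids any decoupling. A geometric lemma (Lemma~\ref{lem3.2}) upgrades ``box is hit'' to ``at least $L/4$ vertical (or horizontal) segments of the box are hit'', for one box in each of $K\asymp L_0/L$ columns at an intermediate scale $L=L_0\,e^{-(\log L_0)^{1/3}}$. Khasminskii's lemma (Lemma~\ref{lem3.3}) then shows that for a \emph{single} walk the total number of such segments hit has exponential moments, because its expectation is only $c\,(L/\log L)\log(L_0/L)=o(L_0)$; lifting this to the Poisson process gives $\IE\big[\exp\langle\mu_{B,u_0},\phi\rangle\big]\le e^{c(\log L_0)^2}$, cf.~(\ref{3.19}). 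On the crossing event this functional exceeds $c(\log L_0)^{2/3}e^{(\log L_0)^{1/3}}$, and exponential Chebyshev swamps the $e^{c(L_0/L)\log(L_0/L)}$ entropy. The essential difference from your approach is that the paper works with a \emph{sum} of many segment-hit indicators, which is amenable to moment methods for a single trajectory, rather than with a \emph{product} of box-hit events, which would require the unavailable independence.
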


\begin{proof}
We introduce the event
\begin{align}
\cC_{L_0}  =& \;\mbox{there is a self-avoiding $*$-path in $ ([0,2 L_0) \times [0,6 L_0) \times \{0\}^{d-2}) \cap \cI^{u_0}$}  \label{3.3}
\\
&\; \mbox{starting in $\{0\} \times [0,6 L_0) \times \{0\}^{d-2}$ and ending in} \nonumber 
\\
&\; \{2L_0 - 1\} \times [0,6 L_0) \times \{0\}^{d-2}\,. \nonumber
\end{align}

\begin{center}
\psfragscanon
\includegraphics[width=6cm,height=6cm]{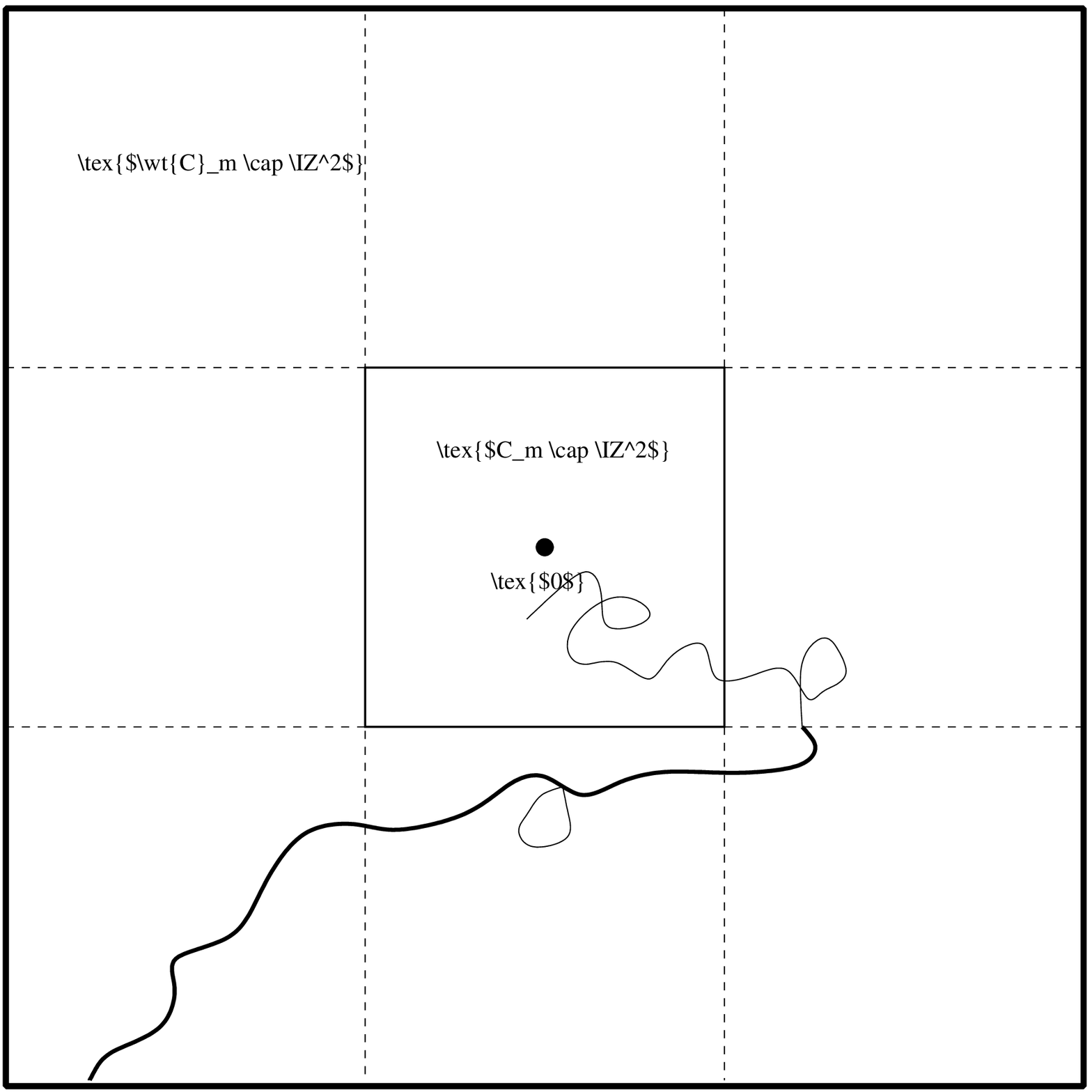}
\end{center}

\begin{center}
\begin{tabular}{ll}
Fig. 2: &An illustration of the fact that when $B_m^{u_0}$ occurs, the trace of $\cI^{u_0}$ on \\
&one of the four overlapping rectangles bordering $C_m \cap \IZ^2$ and isometric \\
&to $[0,2L_0) \times [0,6L_0) \times \{0\}^{d-2}$ contains a self-avoiding $*$-path between\\
&the long sides of the rectangle.
\end{tabular}
\end{center}

\medskip
Consider $m \in I_0$, cf.~(\ref{2.3}), the label of the box at level $0$ containing the origin. On the event $B_m^{u_0}$, cf.~(\ref{2.7}), we can find a $*$-self avoiding path from $\partial_{\rm int} \,\wt{C}_m$ to $\partial C_m$ in $(\wt{C}_m \backslash C_m) \cap \cI^{u_0} \cap \IZ^2$. One can extract from this path a self-avoiding $*$-path linking the long sides of one of the four overlapping rectangles isometric to $[0,2L_0) \times [0,6 L_0) \times \{0\}^{d-2}$ and covering $(\wt{C}_m \backslash C_m) \cap \IZ^2$. As a consequence of (\ref{1.14}) with $u = u_0$, or alternatively of the fact that the law of $\cI^{u_0}$ is invariant under the discrete isometries of $\IZ^d$, we find that
\begin{equation}\label{3.4}
q_0(u_0) \le 4 \IP [\cC_{L_0}]\,.
\end{equation}

\n
For large $L_0$ we then introduce the smaller length scale 
\begin{equation}\label{3.5}
L = 1000 [L_0 \,\exp\{- (\log L_0)^{\frac{1}{3}}\}]\,,
\end{equation}

\n
as well as the squares in the strip $U = [0,2 L_0) \times \IZ \times \{0\}^{d-2}$:
\begin{equation}\label{3.6}
\begin{array}{l}
C_{k,\ell} = k L \,e_1 + \ell \,\mbox{\f $\dis\frac{L}{1000}$} \;e_2 + [0,L)^2 \times \{0\}^{d-2} \subseteq U, \;\mbox{with}
\\[2ex]
0 \le k <  K = \Big[\mbox{\f $\dis\frac{2 L_0}{L}$}\Big] \;\mbox{and} \;|\ell | < K^\prime = \Big[ 10^4 \;\mbox{\f $\dis\frac{L_0}{L}$}\Big]\,.
\end{array}
\end{equation}

\medskip\n
For $k, \ell$ as above we also consider the box
\begin{equation}\label{3.7}
B_{k,\ell} = C_{k,\ell} \times [0,L)^{d-2} (\supseteq C_{k,\ell})\,,
\end{equation}

\n
and the nearly concentric sub-square of $C_{k,\ell}$:
\begin{equation}\label{3.8}
C^\prime_{k,\ell} = k L\,e_1 + \ell \;\mbox{\f $\dis\frac{L}{1000}$} \;e_2 + \Big[\mbox{\f $\dis \frac{4L}{10}, \frac{6L}{10}$}\Big]^2 \times \{0\}^{d-2}\,.
\end{equation}

\n
We denote with $\pi_1,\pi_2$ the projections from $\IZ^d$ onto $\IZ e_1$ and $\IZ e_2$ respectively. The next lemma is close in spirit to the geometric lemma of \cite{DembSzni06}, p.~332-334, albeit simpler due to the two-dimensional situation considered here.

\begin{lemma}\label{lem3.2}
For large $L_0$, on $\cC_{L_0}$, for each $0 \le k < K$ there exists $\ell_k$ with $|\ell_k| < K^\prime$, such that
\begin{equation}\label{3.9}
|\pi_1 \,(\cI^{u_0} \cap C_{k,\ell_k})| \ge \mbox{\f $\dis\frac{L}{4}$} \;\mbox{or}\; |\pi_2 (\cI^{u_0} \cap C_{k,\ell_k})| \ge \mbox{\f $\dis\frac{L}{4}$}\,.
\end{equation}
\end{lemma}

\begin{proof}
Consider $\o \in \cC_{L_0}$, we can find a self-avoiding $*$-path $x_i$, $0 \le i \le N$, as in (\ref{3.3}), such that in addition $|x_i - x_j|_\infty = 1$ implies $|i-j| = 1$, whenever $0 \le i$, $j \le N$, and only $x_0$ and $x_N$ belong to the sides of the strip $U$. Denote with $S = \{x_i, 0 \le i \le N\}$ the range of the path, and with $\cT op$ the connected component in $U \backslash S$ ``above the path'', i.e. containing $[0,2L_0) \times [6 L_0,\infty)$. With the Jordan curve theorem for polygons, cf.~\cite{Dies00}, p.~68, one sees that $[0,2 L_0) \times (-\infty,-1] \subseteq U \backslash S$, is disjoint from $\cT op$. When $L_0$ is large, it thus follows that
\begin{equation*}
\mbox{for $0 \le k < K$, $C^\prime_{k,\ell} \subseteq \cT op$, when $\ell = K^\prime - 1$ and $C^\prime_{k,\ell} \subseteq \cT op^c $, when $\ell = -(K^\prime - 1)$}\,.
\end{equation*}

\n
Tracking the relative fraction of points of $\cT op$ in $C^\prime_{k,\ell}$ as $\ell$ varies, one sees ``by continuity'', that for some $|\ell_k| < K^\prime$,
\begin{equation*}
C^\prime_{k,\ell_k} \cap \cT op \not= \phi \;\mbox{and} \;C^\prime_{k,\ell_k} \cap (\cT op)^c \not= \phi\,.
\end{equation*}

\n
This implies that $C^\prime_{k,\ell_k}$ meets $S$ as well. By definition of $S$, we can then find a $*$-path from $C^\prime_{k,\ell_k}$ to $\partial_{\rm int} C_{k,\ell_k}$. 
Either the $\pi_1$ or $\pi_2$-projection of this path contains at least $\frac{L}{4}$ points and the claim (\ref{3.9}) follows. 
\end{proof}

\begin{center}
\psfragscanon
\includegraphics[width=8cm,height=8cm]{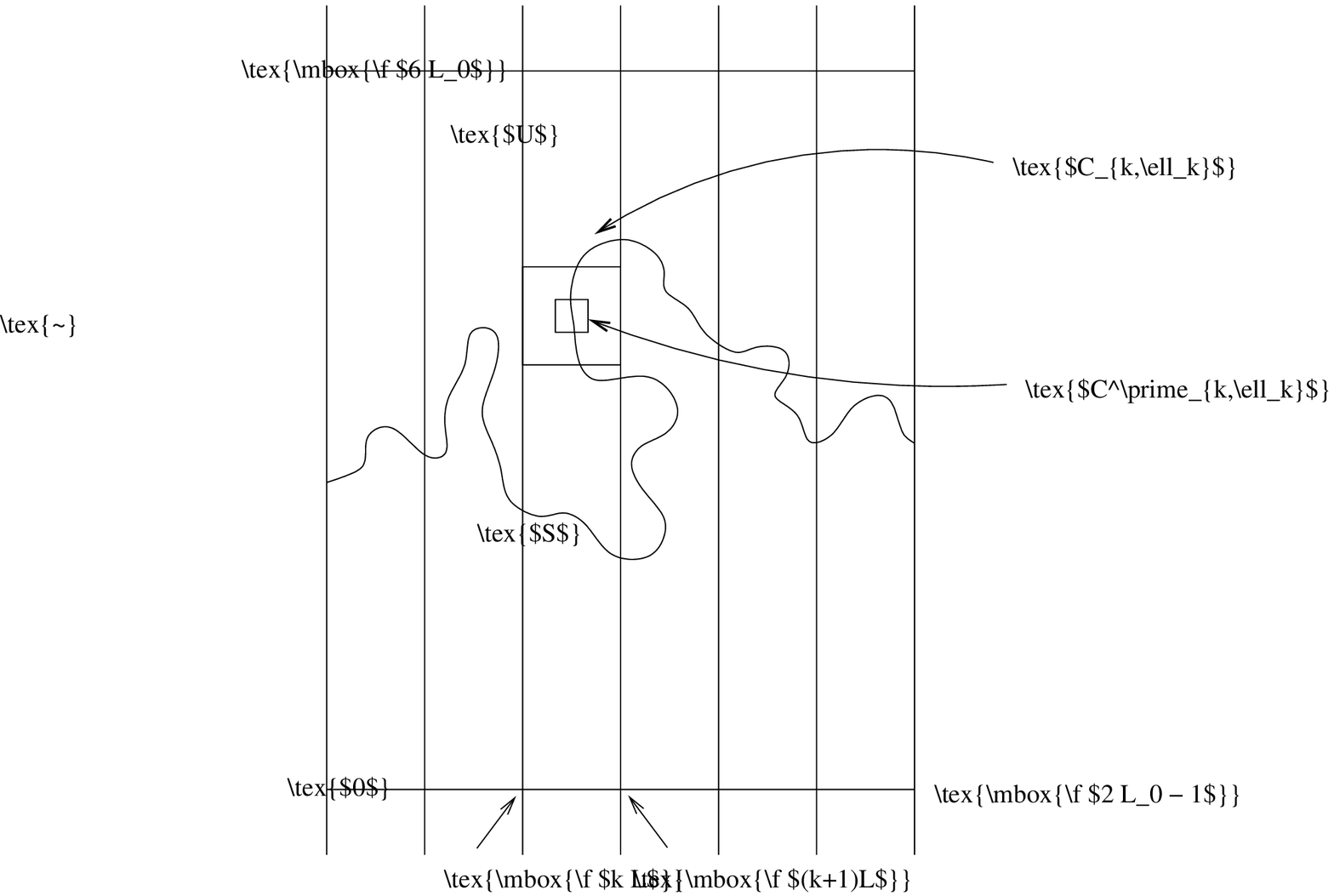}
\end{center}

\begin{center}
Fig. 3: An illustration of the boxes $C_{k,\ell_k}$ and $C^\prime_{k,\ell_k}$. 
\end{center}

\medskip
As a result of the above lemma we see that for large $L_0$,
\begin{equation}\label{3.10}
\cC_{L_0} \subseteq \bigcup\limits_{(\ell_k)} \;\cA_1^{(\ell_k)} \cup \cA_2^{(\ell_k)}\,,
\end{equation}

\n
where $(\ell_k)$ runs over all maps $k \in [0,K) \rightarrow \ell_k \in (-K^\prime,K^\prime)$, with $K, K^\prime$ defined in (\ref{3.6}), and for each $(\ell_k)$ we have set:
\begin{equation}\label{3.11}
\cA_i^{(\ell_k)} = \Big\{\o \in \Omega; \;\Big |\Big\{r \in  [0,K); \;|\pi_i (\cI^{u_0} \cap C_{r,\ell_r}) | \ge \mbox{\f $\dis\frac{L}{4}$}\Big\}\Big| \ge \mbox{\f $\dis\frac{K}{2}$} \Big\}, \;\mbox{for $i = 1,2$}\,.
\end{equation}

\n
With a rough counting argument we thus find that for large $L_0$
\begin{equation}\label{3.12}
\IP[\cC_{L_0}] \le e^{c \,\frac{L_0}{L} \,\log\big(c\,\frac{L_0}{L}\big)} \;\sup\limits_{(\ell_k), i = 1,2} \IP[\cA_i^{(\ell_k)}]\,,
\end{equation}

\medskip\n
where in the above supremum $(\ell_k)$ runs over the same collection as in (\ref{3.10}).

\medskip
We will now bound $\IP[\cA_1^{(\ell_k)}]$ uniformly in $(\ell_k)$. An analogous bound for $\IP[\cA_2^{(\ell_k)}]$ is derived in a similar fashion. We thus consider some $(\ell_k)$ as above and for $0 \le k < K$, denote with $\cS_k$ the collection of ``vertical segments'' $\cI$ of the form $\pi_1^{-1}(v) \cap C_{k,\ell_k}$, for $v \in [k L, (k+1) L) = \pi_1 (C_{k,\ell_k})$. We also write $\cS = \bigcup_{0 \le k < K} \cS_k$ for the collection of vertical segments in all $C_{k,\ell_k}$, $0 \le k < K$. The next step is, (see (\ref{1.2}) for the  notation):
\begin{lemma}\label{lem3.3}
For large $L_0$, one has
\begin{equation}\label{3.13}
\sup\limits_{x \in \IZ^d} \;E_x \Big[\exp\Big\{ \dis\frac{c_8}{(\log \frac{L_0}{L})}\;\mbox{\f $\dis\frac{\log L}{L}$} \;\dsl_{I \in \cS} \,1_{\{H_I < \infty\}}\Big\}\Big] \le 2\,.
\end{equation}
\end{lemma}

\begin{proof}
With Khashminskii's lemma, cf.~\cite{Khas59} as well as (\ref{2.46}) of \cite{DembSzni06}, it suffices to prove that for large $L_0$,
\begin{equation}\label{3.14}
\sup\limits_{x \in \IZ^d} \;E_x \Big[\dsl_{I \in \cS} \;1_{\{H_I < \infty\}}\Big] \le c\,\log \Big(\mbox{\f $\dis\frac{L_0}{L}$}\Big) \;\mbox{\f $\dis\frac{L}{\log L}$} \;.
\end{equation}

\medskip\n
With the help of (\ref{1.7}) and classical bounds on the Green function, cf. \cite{Lawl91}, p.~31, we see that for any $0 \le k < K$, and $x \in C_{k,\ell_k}$ one has
\begin{equation*}
\begin{split}
\dsl_{I \in \cS_k} \,P_x[H_I < \infty] & \le \dsl^L_{\ell = 1} \;\dis\frac{c}{\ell^{d-3}}\,, \;\;\mbox{if $d \ge 4$}\,,
\\
& \le \dsl^L_{\ell = 1} \;c\; \dis\frac{1 + \log(\frac{L}{\ell})}{\log L}\;, \;\;\mbox{if $d = 3$}\,.
\end{split}
\end{equation*}
Hence we see that
\begin{equation}\label{3.15}
\sup\limits_{x \in C_{k,\ell_k}} E_x \Big[\dsl_{I \in \cS_k} 1_{\{H_I < \infty\}}\Big] \le c\;\dis\frac{L}{\log L}, \;\mbox{for $0 \le k < K$}\,.
\end{equation}

\medskip\n
Applying once again standard bounds on the Green function and (\ref{1.7}) we find that for $x \in C \stackrel{\rm def}{=} \bigcup_{0 \le k < K} C_{k,\ell_k}$, one has
\begin{equation}\label{3.16}
\dsl_{0 \le k < K} P_x[H_{C_{k,\ell_k}} < \infty] \stackrel{(\ref{3.7})}{\le} \dsl_{0 \le k < K} P_x [H_{B_{k,\ell_k}} < \infty] \le c \dsl_{1 \le j \le [\frac{L_0}{L}]} \;\dis\frac{L^{d-2}}{(jL)^{d-2}} \le c \log \mbox{\f $\dis\frac{L_0}{L}$}\,.
\end{equation}

\n
Thus for any $x \in \IZ^d$ applying the strong Markov property we find that:
\begin{equation}\label{3.17}
\begin{array}{l}
E_x \Big[\dsl_{I \in \cS} 1_{\{H_I < \infty\}}\Big] = E_x \Big[H_C < \infty, \;E_{X_{H_C}} \Big[ \dsl_{I \in \cS} \,1_{\{H_I < \infty\}}\Big]\Big] \le
\\[2ex]
\sup\limits_{z \in C} \,E_z \Big[\dsl_{0 \le k < K} \;\dsl_{I \in \cS_k} 1_{\{H_I < \infty\}}\Big] \le
\\[1ex]
 \sup\limits_{z \in C} \;\dsl_{0 \le k < K} \;E_z \Big[H_{C_{k,\ell_k}} < \infty, E_{X_{H_{C_{k,\ell_k}}}} \Big[\dsl_{I \in \cS_k} 1_{\{H_I < \infty\}}\Big]\Big] \stackrel{(\ref{3.15}),(\ref{3.16})}{\le}
 \\
 \\[-2ex]
c\;\mbox{\f $\dis\frac{L}{\log L}$} \;\log \Big(\mbox{\f $\dis\frac{L_0}{L}$}\Big) \;.
\end{array}
\end{equation}

\medskip\n
This proves (\ref{3.14}) and thus concludes the proof of the lemma.
\end{proof}

We now resume the task of bounding $\IP[\cA_1^{(\ell_k)}]$. We introduce the non-negative measurable function on $W_+$:
\begin{equation}\label{3.18}
\phi(w) = \dis\frac{c_8}{\log\big(\frac{L_0}{L}\big)} \;\mbox{\f $\dis\frac{\log L}{L}$} \;\dsl_{I \in \cS} \;1_{\{H_I (w) < \infty\}}\,,
\end{equation}

\medskip\n
as well as $B = B(0,20 L_0) \supseteq C$, for large $L_0$, (see above (\ref{3.16}) for the notation). Hence for large $L_0$, we find that in the notation of (\ref{1.13})
\begin{equation}\label{3.19}
\begin{array}{l}
\IE[\exp\{ < \mu_{B, u_0}, \phi > \}] \stackrel{(\ref{1.14})}{=} \exp\{u_0 \,E_{e_B} [e^\phi - 1]\} \stackrel{(\ref{3.13})}{\le}   
\\[1ex]
\exp\{c \, u_0 \,{\rm cap}(B)\} \stackrel{(\ref{1.8}),(\ref{3.1})}{\le} \exp\{c (\log L_0)^2\}\,.
\end{array}
\end{equation}

\medskip\n
Since on $\cA_1^{(\ell_k)}$ one has $\langle \mu_{B,u_0}, \phi \rangle \ge \frac{c}{\log(\frac{L_0}{L})} \;\frac{\log L}{L} \;L_0$, it follows that
\begin{equation*}
\IP[\cA_1^{(\ell_k)}] \le \exp\Big\{- \dis\frac{c}{\log \big(\frac{L_0}{L}\big)} \;\;\mbox{\f $\dis\frac{\log L}{L}$} \;L_0 + c^\prime (\log L_0)^2\Big\} \,.
\end{equation*}

\n
With (\ref{3.5}) we also see that for large $L_0$,
\begin{equation}\label{3.20}
\begin{split}
c\;\log L_0 & \le \log L \le c^\prime \log L_0, \;c \exp\{{(\log L_0)^\frac{1}{3}}\} \le \mbox{\f $\dis\frac{L_0}{L}$} \le c^\prime \exp\{(\log L_0 )^{\frac{1}{3}}\}, \;\mbox{and}
\\
\log\Big(\mbox{\f $\dis\frac{L_0}{L}$}\Big)& \le c(\log L_0)^{\frac{1}{3}}\;.
\end{split}
\end{equation}

\n
As a result we deduce that for large $L_0$
\begin{equation}\label{3.21}
\sup\limits_{(\ell_k)} \IP[\cA_1^{(\ell_k)}] \le \exp\big\{ - c (\log L_0)^{\frac{2}{3}} \;e^{(\log L_0)^{\frac{1}{3}}}\big\}\,.
\end{equation}

\n
A similar bound holds with $\cA_2^{(\ell_k)}$ in place of $\cA_1^{(\ell_k)}$. Coming back to (\ref{3.12}) we then see with (\ref{3.20}) that for large $L_0$
\begin{equation}\label{3.22}
\IP[\cC_{L_0} ] \le \exp\{ - c (\log L_0)^{\frac{2}{3}} \;e^{(\log L_0)^{\frac{1}{3}}}\big\}\,,
\end{equation}

\medskip\n
which is more than enough to prove the claim (\ref{3.2}).
\end{proof}

We now come to the main result showing that for small $u$ the vacant set at level $u$ percolates in planes.

\begin{theorem}\label{theo3.4} $(d \ge 3)$

\medskip
For small $u > 0$,
\begin{equation}\label{3.23}
\mbox{$\IP$-a.s., $\cV^u \cap \IZ^2$ contains an infinite connected component}\,.
\end{equation}
\end{theorem}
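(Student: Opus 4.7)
The plan is to combine the renormalization of Section 2 with the initial estimate of Theorem \ref{theo3.1} to produce a positive level $u_\infty$ at which the probability of a $*$-circuit of $\cI^{u_\infty}\cap \IZ^2$ surrounding the origin is strictly smaller than $\IP[0\in \cV^{u_\infty}]$. Planar duality in $\IZ^2$ together with an ergodicity argument will then deliver the claim. Concretely, I would first use Theorem \ref{theo3.1} to fix $L_0$ large enough that the hypothesis (\ref{2.72}) of Theorem \ref{theo2.5} holds. Theorem \ref{theo2.5} then supplies a level $u_\infty>0$ with $q_n(u_\infty)\le c\,L_n^{-(1+2a)}$ for every $n\ge 0$, and since $u_\infty\le u_0\to 0$ as $L_0\to\infty$ we also have $\IP[0\in\cV^{u_\infty}]=\exp(-u_\infty/g(0))\to 1$.

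I would then bound $\IP[\exists\,*\text{-circuit around }0\text{ in }\cI^{u_\infty}\cap\IZ^2]$ by splitting according to the $\ell^\infty$-radius $R(\gamma)=\max_{x\in\gamma}|x|_\infty$. In the ``large'' regime $R(\gamma)\ge 10L_0$, if $R(\gamma)\in[10L_n,10L_{n+1})$ then $\gamma$ meets some box $C_m$, $m\in I_n$, with $C_m\cap S(0,10L_n)\ne\emptyset$. For such $m$ one has $0\notin \wt{C}_m$, and since $\gamma$ encircles $0$, the circuit must contain a $*$-path from $C_m$ to $\partial_{\rm int}\wt{C}_m$, triggering the event $B_m^{u_\infty}$. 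Only $O(1)$ boxes at scale $n$ meet $S(0,10L_n)\cap\IZ^2$, and a union bound over these $m$'s together with summation over scales yields
\[
\IP\bigl[\exists\,*\text{-circuit around }0\text{ in }\cI^{u_\infty}\cap\IZ^2\text{ with }R(\gamma)\ge 10L_0\bigr]\le c\sum_{n\ge 0}q_n(u_\infty)\le c\,L_0^{-(1+2a)},
\]
which is arbitrarily small for $L_0$ large.

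In the ``small'' regime $R(\gamma)<10L_0$, the circuit lies in $B(0,10L_0)\cap\IZ^2$, and since it encircles $0$, at least one of its four arcs in the half-planes $\{\pm x_j\ge 1\}$, $j=1,2$, yields a $*$-path in $\cI^{u_\infty}\cap\IZ^2$ joining the two long sides of a two-dimensional rectangle of dimensions at most $10L_0\times 20L_0\times\{0\}^{d-2}$. The argument of Theorem \ref{theo3.1} (random walk hitting estimates via Khashminskii's lemma applied to the traces of interlacement trajectories on such rectangles), combined with the monotonicity $u_\infty\le u_0$ and the translation invariance of $\cI^u$, bounds the probability of any such $*$-path by $o(L_0^{-\rho})$ for every $\rho>0$. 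Summing the two ranges therefore gives $\IP[\exists\,*\text{-circuit around }0\text{ in }\cI^{u_\infty}\cap\IZ^2]\to 0$ as $L_0\to\infty$, and in particular this is strictly less than $\IP[0\in\cV^{u_\infty}]$ for $L_0$ sufficiently large.

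On the event $\{0\in\cV^{u_\infty}\}$ minus the above circuit event, planar duality in $\IZ^2$ forces $0$ to lie in an infinite connected component of $\cV^{u_\infty}\cap\IZ^2$, so $\IP[\cV^{u_\infty}\cap\IZ^2\text{ percolates}]>0$. An ergodicity argument inherited from the $\IZ^d$-ergodicity of $Q_{u_\infty}$ (Theorem 2.1 of \cite{Szni07a}) applied to $\IZ^2$-translations then promotes positive probability to the almost sure statement, and monotonicity in $u$ extends the conclusion to every $0<u\le u_\infty$, which establishes (\ref{3.23}). The main obstacle I anticipate lies in the small-circuit estimate: a direct first-moment bound on $|\cI^{u_\infty}\cap B(0,10L_0)\cap\IZ^2|$ delivers the required decay only when $d\ge 5$, so for $d=3,4$ one must genuinely adapt the Khashminskii-based argument underlying Theorem \ref{theo3.1} to the rectangle geometry relevant to encircling $*$-paths.
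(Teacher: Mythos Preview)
Your large-regime reduction contains an incorrect geometric claim: from $R(\gamma)\in[10L_n,10L_{n+1})$ it does \emph{not} follow that $\gamma$ meets $S(0,10L_n)$. A thin $*$-circuit sitting entirely near $S(0,10L_{n+1}-1)$ still surrounds the origin and has $R(\gamma)$ in that range, yet stays far from $S(0,10L_n)$. The standard repair (and what the paper does) is to track the point where the circuit crosses the positive $e_1$-axis: if it hits $(k,0)$ with $k\in[2L_n,2L_{n+1})$, one covers this segment by at most $\ell_n$ translates of a level-$n$ box, each of which has its $\wt{C}$-enlargement avoiding the origin, so the circuit triggers some $B^u_{m}$. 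This costs $O(\ell_n)$ (not $O(1)$) boxes per scale, but since Theorem~\ref{theo2.5} gives $\ell_n^2\,q_n(u_\infty)\le c\,L_n^{-1}$, the series $\sum_n \ell_n\,q_n(u_\infty)$ still converges, so this part is salvageable.

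The genuine gap is in the small-regime. Your assertion that an arc of a circuit with $R(\gamma)<10L_0$ ``joins the two long sides of a rectangle of dimensions at most $10L_0\times 20L_0$'' fails outright for circuits of radius $O(1)$: the eight $*$-neighbours of the origin form a $*$-circuit around $0$ whose arc in $\{x_1\ge 1\}$ consists of three points and crosses no macroscopic rectangle. No adaptation of the Khashminskii argument from Theorem~\ref{theo3.1} can help here, because that argument is about long $*$-crossings, not about the mere presence of a few interlacement points near the origin. And in $d=3$ the natural substitute---bounding the small-circuit probability by $\IP[\cI^{u_\infty}\cap([1,10L_0)\,e_1)\neq\emptyset]$---is of order $u_\infty\cdot L_0/\log L_0\asymp \log L_0$, which diverges. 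So at the fixed level $u_\infty$ your splitting cannot close.

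The paper bypasses this entirely by introducing two further parameters after $L_0$ is fixed: an inner scale $M=2L_{n_0}-1$ and a level $u\le u_\infty$ that may be much smaller than $u_\infty$. One writes
\[
\IP[0\notin\text{infinite cluster of }\cV^u\cap\IZ^2]\le \IP[\cI^u\cap B(0,M)\cap\IZ^2\neq\emptyset]+\IP[\exists\,*\text{-circuit around }0\text{ in }\cI^u\cap(\IZ^2\setminus B(0,M))].
\]
The second term is handled by the renormalization (at level $u_\infty\ge u$) and the $e_1$-axis covering, giving $\sum_{n\ge n_0}\ell_n\,q_n(u_\infty)$, which is made small by choosing $n_0$ large. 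The first term is then bounded by the crude first-moment estimate $c\,L_{n_0}^2\,u$, made small by taking $u$ small \emph{after} $n_0$ is fixed. This decoupling is what makes the argument work uniformly for $d\ge 3$ and removes any need to revisit Theorem~\ref{theo3.1} for the near-origin contribution.
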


\begin{proof}
The argument is in essence the same as for the proof of Theorem 4.3 of \cite{Szni07a}. With Theorem \ref{theo3.1} we can pick $L_0 \ge c_7$ such that (\ref{2.72}) holds and conclude with Theorem \ref{theo2.5} that
\begin{equation}\label{3.24}
c_1 \,\ell_n^2 \,q_n (u_\infty) \le L_n^{-1}, \;\mbox{for all $n \ge 0$, with $u_\infty \in (0,1]$}\,.
\end{equation}

\n
Then for $n _0 \ge 0$  and $M = 2 L_{n_0} -1$ we can write for $u \le u_\infty$
\begin{equation}\label{3.25}
\begin{array}{l}
\mbox{$\IP[0$ does not belong to an infinite connected component of $\cV^u \cap \IZ^2] \le$}
\\[1ex]
\mbox{$\IP[\cI^u \cap B(0,M) \cap \IZ^2 \not= \phi] + \IP[\cI^u \cap (\IZ^2 \backslash B(0,M))$ contains a $*$-circuit}
\\[1ex]
\mbox{surrounding $0]$}\,.
\end{array}
\end{equation}

\n
With (\ref{0.1}), (see also (1.58) of \cite{Szni07a}), $\IP[x \in \cV^u] = e^{-\frac{u}{g(0)}}$, with the notation as below (\ref{1.3}). The above expression is thus smaller than 
\begin{equation*}
\begin{array}{l}
cM^2 ( 1 - e^{-\frac{u}{g(0)}}) + \sum_{n \ge n_0} \IP[\cI^u \cap (\IZ^2 \backslash B(0,M))\;\mbox{contains a $*$-circuit}
\\
\mbox{surrounding $0$ and passing through a point in $[2 L_n, 2 L_{n+1} - 1] e_1]$}.
\end{array}
\end{equation*}

\medskip\n
We can cover $[2 L_n, 2L_{n+1} - 1] \,e_1$ with the translates of the box $C_m$ at level $n$ containing the origin by the vectors $(2 k + 1) \,L_n \,e_1$, $1 \le k \le \ell_n - 1$. A $*$-circuit in $\cI^u \cap (\IZ^2 \backslash B(0,M))$ surrounding $0$ and passing through $(2k + 1)\,L_n \,e_1 + C_m$ necessarily meets $(2 k + 1) L_n \,e_1 + \partial_{\rm int} \wt{C}_m$. As a result the left-hand side of (\ref{3.25}) is smaller than
\begin{equation*}
c \,M^2 (1 - e^{-\frac{u}{g(0)}}) + \dsl_{n \ge n_0} \;\ell_n\,q_n(u) \stackrel{(\ref{3.24})}{\le} c\Big(L^2_{n_0} u + \dsl_{n \ge n_0} \,L_n^{-1}\Big) < 1\,,
\end{equation*}

\n
if we choose $n_0$ large and $u \le c(L_0, n_0)$. This yields a positive $u$ such that
\begin{equation}\label{3.26}
\mbox{$\IP[0$ belongs to an infinite component of $\cV^u \cap \IZ^2] > 0$}\,.
\end{equation}

\n
However (\ref{2.6}) of \cite{Szni07a} shows that the law on $\{0,1\}^{\IZ^2}$ of the indicator function of $\cV^u \cap \IZ^2$ is ergodic under translations. With (\ref{3.26}) we see that the translation invariant event on $\{0,1\}^{\IZ^2}$ consisting of configurations for which the set of locations where the configuration takes the value $1$ contains an infinite connected component, has full measure under this law. The claim (\ref{3.23}) follows.
\end{proof}

\begin{remark}\label{rem3.5} \rm  ~

\medskip\n
1) As mentioned in the Introduction the above theorem combined with the results of \cite{Szni07a} completes the proof of the non-degeneracy for all $d \ge 3$ of the critical parameter $u_*$ of (\ref{0.3}). It even proves the non-degeneracy for all $d \ge 3$ of the critical parameter attached (with a straightforward modification of (\ref{0.2}), (\ref{0.3})) to the percolation of $\cV^u \cap \IZ^2$. It is plausible that this critical value is strictly smaller than $u_*$. However one may wonder whether it is possible to approximate $u_*$ from below by critical values corresponding to percolation of the vacant set in thick two-dimensional slabs, as in the case of Bernoulli percolation, see \cite{Grim99}, p.~148. 

\bigskip\n
2) With Remark \ref{rem2.6} 2) and a small variation in the proof of Theorem \ref{theo3.1}, (cf.~(\ref{3.19}) where the definition of $u_0$ involves a new constant $c^\prime(M)$ in place of $4/c_2$), we see that for any $M \ge 1$, when $L_0 \ge c(M)$, then
\begin{equation}\label{3.27}
c^{\prime\prime}(M) \ell_n^{2} \,q_n(u_\infty) \le c^{\prime\prime} (M) \ell_n^{2} \,q_n(u_n) \le L_n^{-M}, \;\mbox{for all $n \ge 0$}\,,
\end{equation}

\medskip\n
with $u_n$ as in (\ref{2.67}) and $u_\infty  = u_0 \times \prod_{n \ge 0} (1 + \frac{1}{\log L_n})$.

\medskip
As a direct consequence of this result, (see also (\ref{2.1}), (\ref{2.2}), (\ref{2.8})), it follows that for any $\rho > 0$, there exists $u(\rho) > 0$, such that for $u \le u(\rho)$,
\begin{equation}\label{3.28}
\lim\limits_{L \r \infty} \,L^\rho \,\IP[\mbox{there is a $*$-path from $0$ to $S(0,L)$ in $\cI^u \cap \IZ^2] = 0$}\,.
\end{equation}
\hfill $\square$
\end{remark}

\end{document}